\providecommand{\U}[1]{\protect\rule{.1in}{.1in}}
\newtheorem{theorem}{Theorem}
\newtheorem{definition}[theorem]{Definition}
\newtheorem{lemma}[theorem]{Lemma}
\newtheorem{notation}[theorem]{Notation}
\newtheorem{proposition}[theorem]{Proposition}
\newtheorem{remark}[theorem]{Remark}
\newenvironment{proof}[1][Proof]{\noindent\textbf{#1.} }{\ \rule{0.5em}{0.5em}}
\begin{document}

\title{An algebraic result on the topological closure of the set of rational points
on a sphere whose center is non-rational, \textrm{II}}
\author{By Jun-ichi Matsushita}
\date{}
\maketitle

\begin{abstract}
Let $S$ be a sphere in $\mathbb{R}^{n}$ such that $S\cap\mathbb{Q}^{n}%
\neq\emptyset$ and let $\operatorname{Cl}$ denote the closure operator in the
Euclidean topology of $\mathbb{R}^{n}$. If the center of $S$ is in
$\mathbb{Q}^{n}$, then $\operatorname{Cl}(S\cap\mathbb{Q}^{n})$ is $S$, as is
easily proved. If the center of $S$ is \emph{not} in $\mathbb{Q}^{n}$, then
what is $\operatorname{Cl}(S\cap\mathbb{Q}^{n})$? This question, which was
answered partially in the author's paper [Proc. Japan Acad. Ser. A Math. Sci.
\textbf{80 }(2004), no. 7, 146--149], is answered completely in this paper by
representing $\operatorname{Cl}(S\cap\mathbb{Q}^{n})$ in terms of the group of
$\mathbb{Q}$-automorphisms of the algebraic closure of $\mathbb{Q}(\gamma
_{1},\ldots,\gamma_{n})$ in $\mathbb{C}$, where $\gamma_{1},\ldots,\gamma_{n}$
denote the coordinates of the center of $S$.

\end{abstract}

\section{\textbf{Introduction.\label{intr}}}

\begin{notation}
\emph{Let }$\operatorname{Cl}$ \emph{denote the closure operator in the
Euclidean topology of} $\mathbb{R}^{n}$\emph{, and }$\left\vert \cdot
\right\vert $ \emph{(resp. }$\left\langle \cdot,\cdot\right\rangle $\emph{)
the standard Euclidean norm (resp. inner product) in }$\mathbb{R}^{n}$\emph{.
For each }$x=(x_{i})_{1\leqslant i\leqslant n}\in\mathbb{C}^{n}$\emph{, let
}$\Re x$\emph{\ (resp. }$\Im x$\emph{) denote }$(\Re x_{i})_{1\leqslant
i\leqslant n} $\emph{\ (resp. }$(\Im x_{i})_{1\leqslant i\leqslant n}$\emph{),
where }$\Re x_{i}$\emph{\ (resp. }$\Im x_{i}$\emph{) denotes the real (resp.
imaginary) part of }$x_{i}$.
\end{notation}

\footnotetext{2010 \textit{Mathematics Subject Classification}. Primary 14G05;
Secondary 14P25, 12F20, 15A99. \ \ \ \ \
\par
\textit{Key words and Phrases}. sphere, rational point, topological closure,
center, algebraic closure, automorphism group.}Let $S$ be a sphere in
$\mathbb{R}^{n}$ such that $S\cap\mathbb{Q}^{n}\neq\emptyset$ and let us ask
the following question:
\[
\text{What is }\operatorname{Cl}(S\cap\mathbb{Q}^{n})\text{?}%
\]
Our purpose is to answer this question, that is, to generalize the following
easy-to-prove proposition to the case that the center of $S$ is not
necessarily in $\mathbb{Q}^{n}$.

\begin{proposition}
\label{starting}If the center of $S$ is in $\mathbb{Q}^{n}$, then
\begin{equation}
\operatorname{Cl}(S\cap\mathbb{Q}^{n})=S. \tag*{(1)}\label{origin}%
\end{equation}

\end{proposition}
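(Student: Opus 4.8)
The plan is to show $S \subseteq \operatorname{Cl}(S \cap \mathbb{Q}^n)$; the reverse inclusion is automatic since $S$ is closed. Write $S = \{x \in \mathbb{R}^n : |x - c| = r\}$ with $c \in \mathbb{Q}^n$ and $r^2 \in \mathbb{Q}_{\geq 0}$ (the latter because $S \cap \mathbb{Q}^n \neq \emptyset$ forces $r^2 = |p - c|^2$ for some rational point $p$). Fix an arbitrary $x_0 \in S$ and an arbitrary $\varepsilon > 0$; I must produce a point of $S \cap \mathbb{Q}^n$ within $\varepsilon$ of $x_0$.

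The key step is a rational parametrization of $S$ via stereographic projection from the known rational point $p$. Explicitly, the map sending a direction vector $v \in \mathbb{Q}^{n-1}$ (thought of as parametrizing the tangent hyperplane at $p$, or more simply: the line through $p$ with rational direction $u \in \mathbb{Q}^n$) to the second intersection point of that line with $S$ is given by a formula rational in $u$ and in the rational data $c, r^2$; indeed if $\ell(t) = p + tu$, then $|\ell(t) - c|^2 = r^2$ is a quadratic in $t$ with one root $t = 0$, so the other root is $t = -2\langle p - c, u\rangle / |u|^2 \in \mathbb{Q}$, yielding a rational point $q(u) = p + t u \in S \cap \mathbb{Q}^n$. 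As $u$ ranges over $\mathbb{Q}^n \setminus \{0\}$, the directions $u/|u|$ are dense in the unit sphere, and one checks that $q(u)$ depends continuously on the direction away from the excluded value; hence $\{q(u)\}$ is dense in $S$. Choosing $u$ close to the direction $x_0 - p$ (or, if $x_0 = p$, taking $u$ in any direction and then noting $q$ sweeps out a dense set) puts $q(u)$ within $\varepsilon$ of $x_0$.

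Assembling these: given $x_0 \in S$ and $\varepsilon > 0$, pick a rational direction $u$ with $u/|u|$ sufficiently close to $(x_0 - p)/|x_0 - p|$ (handling $x_0 = p$ trivially, since $p$ itself is rational); then $q(u) \in S \cap \mathbb{Q}^n$ and $|q(u) - x_0| < \varepsilon$ by continuity of the parametrization. Therefore $x_0 \in \operatorname{Cl}(S \cap \mathbb{Q}^n)$, and since $x_0 \in S$ was arbitrary, $S \subseteq \operatorname{Cl}(S \cap \mathbb{Q}^n)$, giving \ref{origin}.

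The only mild subtlety — the "main obstacle," though it is minor here — is verifying that the direction map $u \mapsto q(u)$ has dense image and behaves continuously near the target direction; this is where one uses that $n \geq 2$ (so that rational directions are dense on the unit sphere) and that the denominators $|u|^2$ and $\langle p - c, u\rangle$ do not vanish for the relevant choices of $u$ (the latter vanishes only for $u$ tangent to $S$ at $p$, which we avoid by perturbing).
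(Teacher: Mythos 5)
Your proof is correct. The paper itself states Proposition \ref{starting} without proof, calling it ``easy-to-prove,'' so there is no paper proof to compare against line by line; but your key device --- parametrizing $S$ by lines through the rational point $p=b$, so that rational directions give the dense set of rational second intersections --- is precisely the inversion $\varphi$ from \ref{phai} that the paper introduces in Proposition \ref{both} to handle the general case (indeed $\varphi(x)=q(x-b)$ on $\Pi_\gamma$ in your notation). A couple of cosmetic points: the ``$v\in\mathbb{Q}^{n-1}$'' clause is vestigial since you immediately switch to $u\in\mathbb{Q}^n$; the map $u\mapsto q(u)$ is in fact continuous on all of $\mathbb{R}^n\setminus\{0\}$, with $q(u)=p$ when $\langle p-c,u\rangle=0$, so no values need to be excluded or perturbed around; and the crucial identity $q(x_0-p)=x_0$ for $x_0\in S\setminus\{p\}$, which makes the $\varepsilon$-approximation step work, deserves a one-line verification (it follows from $|x_0-p|^2=-2\langle x_0-p,\,p-c\rangle$, itself a consequence of $|x_0-c|=|p-c|$). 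Finally, the appeal to $n\geq 2$ is harmless since for $n=1$ the sphere has two points, both forced to be rational, making the statement trivial.
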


In Matsushita \cite{matsushitaj}, we attained this purpose partially (see
Remark \ref{earlier}). In this paper, we attain it completely. Let
$\gamma=(\gamma_{i})_{1\leqslant i\leqslant n}$ denote the center of $S$ and
let $\Omega$ be the algebraic closure of $\mathbb{Q}(\gamma_{1},\ldots
,\gamma_{n})$ in $\mathbb{C}$ (note that $\gamma$ is in $\Omega^{n}$ as well
as in $\mathbb{R}^{n}$). Let $\Gamma$ be the group of $\mathbb{Q}%
$-automorphisms of $\Omega$ and, for each $\sigma\in\Gamma$, let $\sigma
_{\ast}$ denote the map $\Omega^{n}\rightarrow\Omega^{n}$ given by
$(x_{i})_{1\leqslant i\leqslant n}\mapsto(\sigma(x_{i}))_{1\leqslant
i\leqslant n}$. Fix a point $b=(b_{i})_{1\leqslant i\leqslant n}\in
S\cap\mathbb{Q}^{n}$ and define, for each $a\in\mathbb{R}^{n}$,
\begin{align*}
S_{a}  &  =\{x\in\mathbb{R}^{n}:\left\vert x-a\right\vert =\left\vert
b-a\right\vert \}\\
&  =\text{the sphere through }b\text{ with center }a,\\
\Lambda_{a}  &  =\{x\in\mathbb{R}^{n}:\left\langle a,x-b\right\rangle =0\}\\
&  =\left\{
\begin{array}
[c]{ccc}%
\text{the hyperplane through }b\text{ with normal vector }a & \text{if} &
a\neq0\\
\mathbb{R}^{n} & \text{if} & a=0
\end{array}
\right.
\end{align*}
(note that $S_{\gamma}=S$). Then the result of this paper is stated as follows:

\begin{theorem}
\label{solution}We have
\begin{equation}
\operatorname{Cl}(S\cap\mathbb{Q}^{n})=\bigcap\limits_{\sigma\in\Gamma}%
(S_{\Re\sigma_{\ast}(\gamma)}\cap\Lambda_{\Im\sigma_{\ast}(\gamma)}).
\tag*{(2)}\label{completeanswer}%
\end{equation}

\end{theorem}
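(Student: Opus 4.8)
The inclusion $\operatorname{Cl}(S\cap\mathbb{Q}^{n})\subseteq\bigcap_{\sigma\in\Gamma}(S_{\Re\sigma_{\ast}(\gamma)}\cap\Lambda_{\Im\sigma_{\ast}(\gamma)})$ is the easy half, and I would dispatch it first. Writing the equation $|x-\gamma|=|b-\gamma|$ of $S$ in the form $2\langle x-b,\gamma\rangle=|x|^{2}-|b|^{2}$, observe that for $x\in S\cap\mathbb{Q}^{n}$ the quantities $x$, $b$, $|x|^{2}-|b|^{2}$ are all fixed by every $\sigma\in\Gamma$, so applying $\sigma_{\ast}$ gives $2\langle x-b,\sigma_{\ast}(\gamma)\rangle=|x|^{2}-|b|^{2}$; separating real and imaginary parts turns this into $x\in S_{\Re\sigma_{\ast}(\gamma)}$ and $x\in\Lambda_{\Im\sigma_{\ast}(\gamma)}$. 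Hence $S\cap\mathbb{Q}^{n}$, and therefore its closure, lies in the right-hand set of \ref{completeanswer}.

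For the reverse inclusion I would first translate by the rational vector $b$ to reduce to $b=0$ (this affects neither $\Omega$ nor $\Gamma$), and then simplify the right-hand side. Since $b=0$ lies on each $S_{a}$, one has $S_{a}\cap S_{a'}=S_{a}\cap\Lambda_{a-a'}$, so that intersection collapses to $S\cap V^{\perp}$, where $V=\operatorname{span}_{\mathbb{R}}\{\Re\sigma_{\ast}(\gamma)-\gamma,\ \Im\sigma_{\ast}(\gamma):\sigma\in\Gamma\}\subseteq\mathbb{R}^{n}$ and $\perp$ is taken in $\mathbb{R}^{n}$. The crux is then the algebraic claim $V^{\perp}=\operatorname{span}_{\mathbb{R}}U_{\mathbb{Q}}$, where $U_{\mathbb{Q}}=\{v\in\mathbb{Q}^{n}:\langle v,\gamma\rangle\in\mathbb{Q}\}$.

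To prove this claim I would pass to the $\Omega$-subspace $V_{\Omega}^{\perp}=\{x\in\Omega^{n}:\langle x,\sigma_{\ast}(\gamma)\rangle=\langle x,\gamma\rangle\text{ for all }\sigma\in\Gamma\}$ of $\Omega^{n}$ and argue by Galois descent. First, $\Omega^{\Gamma}=\mathbb{Q}$: an $\alpha\in\Omega\setminus\mathbb{Q}$ algebraic over $\mathbb{Q}$ has a conjugate different from itself, and an $\alpha$ transcendental over $\mathbb{Q}$ is moved by an automorphism induced by a shift $t\mapsto t+1$ of a transcendence basis of $\Omega/\mathbb{Q}$ containing $\alpha$; so every element of $\Omega\setminus\mathbb{Q}$ is moved by some member of $\Gamma$. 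Second, $V_{\Omega}^{\perp}$ is $\Gamma$-stable: for $x\in V_{\Omega}^{\perp}$ and $\tau\in\Gamma$ one computes $\langle\tau_{\ast}(x),\sigma_{\ast}(\gamma)\rangle=\tau(\langle x,(\tau^{-1}\sigma)_{\ast}(\gamma)\rangle)=\tau(\langle x,\gamma\rangle)$ for every $\sigma$, so $\tau_{\ast}(x)\in V_{\Omega}^{\perp}$. Third, the unique reduced row echelon basis matrix of $V_{\Omega}^{\perp}$ is carried by each $\tau_{\ast}$ to another reduced row echelon matrix with the same row space, hence is fixed by $\Gamma$, hence by the first point has all entries in $\mathbb{Q}$; thus $V_{\Omega}^{\perp}$ is defined over $\mathbb{Q}$. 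Finally $V_{\Omega}^{\perp}\cap\mathbb{Q}^{n}=U_{\mathbb{Q}}$ (because $\langle v,\sigma_{\ast}(\gamma)\rangle=\sigma(\langle v,\gamma\rangle)$ for rational $v$ and $\Omega^{\Gamma}=\mathbb{Q}$), and intersecting the complexification $V_{\Omega}^{\perp}\otimes_{\Omega}\mathbb{C}$ with $\mathbb{R}^{n}$ yields exactly $V^{\perp}$; hence $V^{\perp}=\operatorname{span}_{\mathbb{R}}U_{\mathbb{Q}}$.

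Granting the claim, the rest is the classical rational-parametrization argument. Since $V^{\perp}=\operatorname{span}_{\mathbb{R}}U_{\mathbb{Q}}$, the set $U_{\mathbb{Q}}=V^{\perp}\cap\mathbb{Q}^{n}$ is dense in $V^{\perp}$; and for nonzero $u\in U_{\mathbb{Q}}$ the line $\mathbb{R}u\subseteq V^{\perp}$ meets $S\cap V^{\perp}=\{x\in V^{\perp}:|x|^{2}=2\langle x,\gamma\rangle\}$ in the two points $0$ and $\tfrac{2\langle u,\gamma\rangle}{|u|^{2}}u$, the second of which is rational because $\langle u,\gamma\rangle,|u|^{2}\in\mathbb{Q}$ and lies on $S$ by construction. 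Letting $u$ run over rational vectors tending to a prescribed $p\in S\cap V^{\perp}$ (a stereographic projection from $0$, exactly as in Proposition \ref{starting}) produces rational points of $S$ tending to $p$, whence $S\cap V^{\perp}\subseteq\operatorname{Cl}(S\cap\mathbb{Q}^{n})$; combined with the easy half, this proves \ref{completeanswer}. I expect the main obstacle to be the descent step above, and specifically making it work when some $\gamma_{i}$ are transcendental, so that $\Omega/\mathbb{Q}$ is not algebraic: it is precisely this case that forces the descent to be carried out through reduced row echelon form (valid for any extension with $\Omega^{\Gamma}=\mathbb{Q}$) rather than through a finite Galois group.
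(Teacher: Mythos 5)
Your proof is correct, and it takes a genuinely different geometric route from the paper's. Where the paper first applies an inversion centered at $b$ to turn the sphere $S$ and the right-hand intersection of spheres and hyperplanes into the real affine subspace $\Pi_{\gamma}\cap\bigcap\Lambda_{\Im\sigma_{\ast}(\gamma)}$, then complexifies to $\Theta_{\gamma}$ and develops a general theory of ``rationality of an affine subspace over a subfield'' (frames, defining systems, defining pairs, the equivalences of Proposition \ref{rationality}, and Proposition \ref{wqw}), you instead translate $b$ to the origin, observe the neat identity $S_{a}\cap S_{a'}=S_{a}\cap\Lambda_{a-a'}$ so that the right-hand side of \ref{completeanswer} collapses to $S\cap V^{\perp}$ with $V^{\perp}$ a real \emph{linear} subspace, and then show directly that $V^{\perp}$ admits a rational basis. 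Your Galois-descent step via the uniqueness of the reduced row echelon basis, anchored by $\Omega^{\Gamma}=\mathbb{Q}$, is exactly the engine that drives the paper's argument as well (it is encoded in Remark \ref{existence}, Lemma \ref{graph}, and the chain \ref{taufyfy}); the paper simply cites Bourbaki for $\Omega^{\Gamma}=\mathbb{Q}$ rather than re-proving it, and your sketch of that fact should likewise just invoke the standard result, since extending an automorphism of $\mathbb{Q}(\alpha)$ to one of $\Omega$ when $\Omega$ is not algebraic over $\mathbb{Q}$ requires the transcendence-basis detour you mention in the transcendental case. The final density step in your argument uses the rational parametrization of a sphere through a rational point restricted to $V^{\perp}$, which is functionally the same as the paper's inversion but is deployed at the end rather than the beginning. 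What the paper's formulation buys is the reusable Proposition \ref{wqw} and the general $\Theta_{a}$ remark (Remark \ref{rtoc}); what your version buys is a shorter, more concrete argument that makes the geometry of the right-hand side of \ref{completeanswer} visibly a ``small sphere'' $S\cap V^{\perp}$. Both are sound.
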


Theorem \ref{solution} is certainly a generalization of Proposition
\ref{starting} because if $\gamma$ is in $\mathbb{Q}^{n}$, then $\bigcap
_{\sigma\in\Gamma}(S_{\Re\sigma_{\ast}(\gamma)}\cap\Lambda_{\Im\sigma_{\ast
}(\gamma)})=S_{\Re\gamma}\cap\Lambda_{\Im\gamma}=S_{\gamma}\cap\Lambda
_{0}=S\cap\mathbb{R}^{n}=S$ and hence \ref{completeanswer} coincides with
\ref{origin}. We prove Theorem \ref{solution} in the following sections.

\begin{remark}
\label{earlier}\emph{The result of Matsushita \cite{matsushitaj} is a
generalization (resp. specialization) of Proposition \ref{starting} (resp.
Theorem \ref{solution}) to the case that }the $\mathbb{Q}$-vector subspace of
$\mathbb{R}$ spanned over $\mathbb{Q}$ by $\gamma_{1}-b_{1},\ldots,\gamma
_{n}-b_{n}$ is a field that is Galois over $\mathbb{Q}$\emph{.}
\end{remark}

\section{\textbf{Equivalent transformation of \ref{completeanswer}.
\label{equivalent}}}

In this section, we equivalently transform \ref{completeanswer}. First we
define, for each $a\in\mathbb{R}^{n}$,
\[
\Pi_{a}=\{x\in\mathbb{R}^{n}:\left\langle a-b,x-b\right\rangle =1\}.
\]
Then we have

\begin{proposition}
\label{both}\ref{completeanswer} is equivalent to
\begin{equation}
\operatorname{Cl}(\Pi_{\gamma}\cap\mathbb{Q}^{n})=\bigcap_{\sigma\in\Gamma
}(\Pi_{\Re\sigma_{\ast}(\gamma)}\cap\Lambda_{\Im\sigma_{\ast}(\gamma)}).
\tag*{(3)}\label{rewrite}%
\end{equation}

\end{proposition}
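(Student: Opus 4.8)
The plan is to exhibit an explicit rational homeomorphism carrying the sphere side of \ref{completeanswer} to the affine‐hyperplane side of \ref{rewrite}. Concretely, I would use the inversion centred at $b$,
\[
\phi\colon\mathbb{R}^{n}\setminus\{b\}\longrightarrow\mathbb{R}^{n}\setminus\{b\},\qquad \phi(x)=b+\frac{2(x-b)}{\left\vert x-b\right\vert ^{2}},
\]
and first record its elementary features: $\phi$ is an involution, hence a homeomorphism of $\mathbb{R}^{n}\setminus\{b\}$; and since $\left\vert x-b\right\vert ^{2}\in\mathbb{Q}\setminus\{0\}$ for every $x\in\mathbb{Q}^{n}\setminus\{b\}$, the map $\phi$ carries $\mathbb{Q}^{n}\setminus\{b\}$ bijectively onto itself.

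The geometric core is the pair of identities $\phi(S_{a}\setminus\{b\})=\Pi_{a}$ and $\phi(\Lambda_{a}\setminus\{b\})=\Lambda_{a}\setminus\{b\}$, valid for every $a\in\mathbb{R}^{n}$. I would obtain them by rewriting $S_{a}=\{x:\left\vert x-b\right\vert ^{2}=2\left\langle x-b,a-b\right\rangle \}$ and dividing by $\left\vert x-b\right\vert ^{2}$, noting that $b\notin\Pi_{a}$ always while $b\in S_{a}\cap\Lambda_{a}$ always, and using that $\phi$ is its own inverse to upgrade the resulting inclusions to equalities. Because $\phi$ is a bijection of $\mathbb{R}^{n}\setminus\{b\}$, it commutes with arbitrary intersections of subsets of that space and with deletion of the point $b$; feeding in the two identities together with $b\notin\Pi_{a}$ then yields $\phi\big((S\cap\mathbb{Q}^{n})\setminus\{b\}\big)=\Pi_{\gamma}\cap\mathbb{Q}^{n}$ and $\phi(V\setminus\{b\})=W$, where $V$ and $W$ denote the right‐hand sides of \ref{completeanswer} and \ref{rewrite} respectively.

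Next I would transport closures across $\phi$. Since $\phi$ is a homeomorphism of $\mathbb{R}^{n}\setminus\{b\}$ it intertwines the closure operator of that subspace; since $\Pi_{\gamma}$ is closed with $b\notin\Pi_{\gamma}$, the $\mathbb{R}^{n}$‐closure of $\Pi_{\gamma}\cap\mathbb{Q}^{n}$ misses $b$ and so coincides with its $(\mathbb{R}^{n}\setminus\{b\})$‐closure; and for any subset of $\mathbb{R}^{n}$ containing $b$, intersecting its $\mathbb{R}^{n}$‐closure with $\mathbb{R}^{n}\setminus\{b\}$ is the same as deleting $b$ after closing. Together these give $\operatorname{Cl}(\Pi_{\gamma}\cap\mathbb{Q}^{n})=\phi\big(\operatorname{Cl}(S\cap\mathbb{Q}^{n})\setminus\{b\}\big)$, while $W=\phi(V\setminus\{b\})$ from the previous step. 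Finally, since $b\in\operatorname{Cl}(S\cap\mathbb{Q}^{n})$ (as $b\in S\cap\mathbb{Q}^{n}$) and $b\in V$ (as $b$ lies on every $S_{a}$ and every $\Lambda_{c}$), injectivity of $\phi$ lets me pass back and forth: \ref{completeanswer} holds iff $\operatorname{Cl}(S\cap\mathbb{Q}^{n})\setminus\{b\}=V\setminus\{b\}$ iff $\operatorname{Cl}(\Pi_{\gamma}\cap\mathbb{Q}^{n})=W$, which is \ref{rewrite}.

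I expect the only delicate point to be the bookkeeping at $b$: the inversion $\phi$ sends $b$ ``to infinity'', so the point $b$ --- which lies on $S$, on every $S_{a}$, and on every $\Lambda_{c}$ --- is exactly what is lost on passing to the $\Pi$‐side, and one must check that inserting and deleting this single point is compatible with all the intersections and closures in play. Beyond that, each step is a short computation and I anticipate no genuine obstacle.
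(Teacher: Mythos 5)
Your proposal is correct and follows essentially the same route as the paper: both use the inversion $\varphi(x)=b+\frac{2}{|x-b|^{2}}(x-b)$ of $\mathbb{R}^{n}\setminus\{b\}$, observe that it carries $S_{a}\setminus\{b\}$ to $\Pi_{a}$, fixes $\Lambda_{a}\setminus\{b\}$ and $\mathbb{Q}^{n}\setminus\{b\}$ setwise, and then transport closures and intersections across this homeomorphism. The paper compresses the bookkeeping at $b$ into a chain of biconditionals, whereas you spell it out more explicitly, but the argument is the same.
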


\begin{proof}
As is easily verified, the inversion of $\mathbb{R}^{n}-\{b\}$\ with respect
to the sphere $\{x\in\mathbb{R}^{n}:\left\vert x-b\right\vert =\sqrt{2}\}$
(that is, the map
\begin{equation}
\mathbb{R}^{n}-\{b\}\rightarrow\mathbb{R}^{n}-\{b\};\text{ \ \ \ }x\mapsto
b+\frac{2}{\left\vert x-b\right\vert ^{2}}(x-b) \tag*{(4)}\label{phai}%
\end{equation}
) is a (self-inverse) homeomorphism with respect to the Euclidean topology,
and maps $\Pi_{a}$ (resp. $\Lambda_{a}-\left\{  b\right\}  $) onto
$S_{a}-\left\{  b\right\}  $ (resp. $\Lambda_{a}-\left\{  b\right\}  $) for
each $a\in\mathbb{R}^{n}$, and maps $\mathbb{Q}^{n}-\left\{  b\right\}  $ onto
$\mathbb{Q}^{n}-\left\{  b\right\}  $. Therefore, denoting it by $\varphi$, we
have
\begin{align*}
\text{\ref{rewrite}}  &  \iff\varphi(\operatorname{Cl}(\Pi_{\gamma}%
\cap\mathbb{Q}^{n}))=\varphi\left(  \bigcap_{\sigma\in\Gamma}(\Pi_{\Re
\sigma_{\ast}(\gamma)}\cap\Lambda_{\Im\sigma_{\ast}(\gamma)})\right) \\
&  \iff\operatorname{Cl}(\varphi(\Pi_{\gamma}\cap\mathbb{Q}^{n}%
))-\{b\}=\bigcap_{\sigma\in\Gamma}\varphi(\Pi_{\Re\sigma_{\ast}(\gamma)}%
\cap\Lambda_{\Im\sigma_{\ast}(\gamma)})\\
&  \iff\operatorname{Cl}(S_{\gamma}\cap\mathbb{Q}^{n}-\left\{  b\right\}
)-\{b\}=\bigcap\limits_{\sigma\in\Gamma}(S_{\Re\sigma_{\ast}(\gamma)}%
\cap\Lambda_{\Im\sigma_{\ast}(\gamma)}-\{b\})\\
&  \iff\operatorname{Cl}(S_{\gamma}\cap\mathbb{Q}^{n})-\{b\}=\bigcap
\limits_{\sigma\in\Gamma}(S_{\Re\sigma_{\ast}(\gamma)}\cap\Lambda_{\Im
\sigma_{\ast}(\gamma)})-\{b\}\\
&  \iff\operatorname{Cl}(S_{\gamma}\cap\mathbb{Q}^{n})=\bigcap\limits_{\sigma
\in\Gamma}(S_{\Re\sigma_{\ast}(\gamma)}\cap\Lambda_{\Im\sigma_{\ast}(\gamma
)})\text{,}%
\end{align*}
which proves the proposition since $S_{\gamma}=S$.
\end{proof}

\begin{notation}
\label{innerproduct}\emph{From now on, we extend }$\left\langle \cdot
,\cdot\right\rangle $ \emph{to }$\mathbb{C}^{n}$ \emph{by setting }%
\[
\left\langle x,y\right\rangle =\sum_{i=1}^{n}x_{i}y_{i}%
\]
\emph{for any }$x=(x_{i})_{1\leqslant i\leqslant n},y=(y_{i})_{1\leqslant
i\leqslant n}$\emph{\ in }$\mathbb{C}^{n}$\emph{\ (note that we have }%
\[
\left\langle \sigma_{\ast}(x),\sigma_{\ast}(y)\right\rangle =\sum_{i=1}%
^{n}\sigma(x_{i})\sigma(y_{i})=\emph{\ }\sigma(\sum_{i=1}^{n}x_{i}%
y_{i})=\sigma(\left\langle x,y\right\rangle )
\]
\emph{for any }$x=(x_{i})_{1\leqslant i\leqslant n},y=(y_{i})_{1\leqslant
i\leqslant n}$\emph{\ in }$\Omega^{n}$\emph{\ and any }$\sigma\in\Gamma
$\emph{).}
\end{notation}

Next we define, for each $a\in\Omega^{n}$,%
\[
\Theta_{a}=\{x\in\mathbb{C}^{n}:\forall\sigma\in\Gamma\text{ }\left\langle
\sigma_{\ast}(a)-b,x-b\right\rangle =1\}.
\]
Then we have
\begin{align*}
\Theta_{\gamma}\cap\mathbb{Q}^{n}  &  =\{x\in\mathbb{Q}^{n}:\forall\sigma
\in\Gamma\text{ }\left\langle \sigma_{\ast}(\gamma)-b,x-b\right\rangle =1\}\\
&  =\{x\in\mathbb{Q}^{n}:\forall\sigma\in\Gamma\text{ }\left\langle
\sigma_{\ast}(\gamma-b),\sigma_{\ast}(x-b)\right\rangle =1\}\\
&  =\{x\in\mathbb{Q}^{n}:\forall\sigma\in\Gamma\text{ }\sigma(\left\langle
\gamma-b,x-b\right\rangle )=1\}\\
&  =\{x\in\mathbb{Q}^{n}:\left\langle \gamma-b,x-b\right\rangle =1\}\\
&  =\Pi_{\gamma}\cap\mathbb{Q}^{n},\\
\Theta_{\gamma}\cap\mathbb{R}^{n}  &  =\{x\in\mathbb{R}^{n}:\forall\sigma
\in\Gamma\text{ }\left\langle \sigma_{\ast}(\gamma)-b,x-b\right\rangle =1\}\\
&  =\left\{  x\in\mathbb{R}^{n}:\forall\sigma\in\Gamma\text{ }\left\langle
\Re\sigma_{\ast}(\gamma)+\sqrt{-1}\Im\sigma_{\ast}(\gamma)-b,x-b\right\rangle
=1\right\} \\
&  =\left\{  x\in\mathbb{R}^{n}:\forall\sigma\in\Gamma\text{ }\left\langle
\Re\sigma_{\ast}(\gamma)-b,x-b\right\rangle +\sqrt{-1}\left\langle \Im
\sigma_{\ast}(\gamma),x-b\right\rangle =1\right\} \\
&  =\left\{  x\in\mathbb{R}^{n}:\forall\sigma\in\Gamma\text{ }(\left\langle
\Re\sigma_{\ast}(\gamma)-b,x-b\right\rangle =1,\text{ }\left\langle \Im
\sigma_{\ast}(\gamma),x-b\right\rangle =0)\right\} \\
&  =\bigcap_{\sigma\in\Gamma}(\Pi_{\Re\sigma_{\ast}(\gamma)}\cap\Lambda
_{\Im\sigma_{\ast}(\gamma)})
\end{align*}
and hence

\begin{proposition}
\label{both2}\ref{rewrite} is equivalent to
\begin{equation}
\operatorname{Cl}(\Theta_{\gamma}\cap\mathbb{Q}^{n})=\Theta_{\gamma}%
\cap\mathbb{R}^{n}. \tag*{(5)}\label{paisolution2}%
\end{equation}

\end{proposition}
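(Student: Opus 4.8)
The plan is to deduce the proposition directly from the two chains of equalities displayed immediately before its statement, since together they already do all the work. Those chains establish two \emph{unconditional} set identities: first, $\Theta_{\gamma}\cap\mathbb{Q}^{n}=\Pi_{\gamma}\cap\mathbb{Q}^{n}$ as subsets of $\mathbb{Q}^{n}\subseteq\mathbb{R}^{n}$; and second, $\Theta_{\gamma}\cap\mathbb{R}^{n}=\bigcap_{\sigma\in\Gamma}(\Pi_{\Re\sigma_{\ast}(\gamma)}\cap\Lambda_{\Im\sigma_{\ast}(\gamma)})$. Neither depends on whether \ref{rewrite} or \ref{paisolution2} is true. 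So I would first note that, since $\Theta_{\gamma}\cap\mathbb{Q}^{n}$ lies in $\mathbb{Q}^{n}$ and hence in $\mathbb{R}^{n}$, the expression $\operatorname{Cl}(\Theta_{\gamma}\cap\mathbb{Q}^{n})$ makes sense, and the first identity gives $\operatorname{Cl}(\Theta_{\gamma}\cap\mathbb{Q}^{n})=\operatorname{Cl}(\Pi_{\gamma}\cap\mathbb{Q}^{n})$.

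Next I would simply substitute. The left-hand side of \ref{paisolution2} equals the left-hand side of \ref{rewrite} (both equal $\operatorname{Cl}(\Pi_{\gamma}\cap\mathbb{Q}^{n})$), and the right-hand side of \ref{paisolution2} equals the right-hand side of \ref{rewrite} (both equal $\bigcap_{\sigma\in\Gamma}(\Pi_{\Re\sigma_{\ast}(\gamma)}\cap\Lambda_{\Im\sigma_{\ast}(\gamma)})$). Hence the equality of sets asserted by \ref{paisolution2} holds if and only if the equality of sets asserted by \ref{rewrite} holds, which is exactly the claim of the proposition.

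For completeness I would spell out why the two displayed chains are valid, since that is where the only genuine content sits. The first rests on the fact that $b\in\mathbb{Q}^{n}$ and $x\in\mathbb{Q}^{n}$ are fixed coordinatewise by every $\sigma_{\ast}$, together with the $\Gamma$-equivariance of $\langle\cdot,\cdot\rangle$ recorded in Notation \ref{innerproduct}; this yields $\langle\sigma_{\ast}(\gamma)-b,x-b\rangle=\langle\sigma_{\ast}(\gamma-b),\sigma_{\ast}(x-b)\rangle=\sigma(\langle\gamma-b,x-b\rangle)$, and then demanding that this be $1$ for all $\sigma\in\Gamma$ is equivalent to $\langle\gamma-b,x-b\rangle=1$ (apply $\sigma=\mathrm{id}$ for one implication, $\sigma(1)=1$ for the other). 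The second rests on writing $\sigma_{\ast}(\gamma)-b=(\Re\sigma_{\ast}(\gamma)-b)+\sqrt{-1}\,\Im\sigma_{\ast}(\gamma)$ and observing that, for $x\in\mathbb{R}^{n}$ so that $x-b$ is real, the complex number $\langle\sigma_{\ast}(\gamma)-b,x-b\rangle$ has real part $\langle\Re\sigma_{\ast}(\gamma)-b,x-b\rangle$ and imaginary part $\langle\Im\sigma_{\ast}(\gamma),x-b\rangle$; thus it equals $1$ precisely when $\langle\Re\sigma_{\ast}(\gamma)-b,x-b\rangle=1$ and $\langle\Im\sigma_{\ast}(\gamma),x-b\rangle=0$, i.e.\ precisely when $x\in\Pi_{\Re\sigma_{\ast}(\gamma)}\cap\Lambda_{\Im\sigma_{\ast}(\gamma)}$.

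I do not expect any real obstacle: once $\Theta_{a}$ has been defined so as to bundle the whole family $\{\Pi_{\Re\sigma_{\ast}(\gamma)}\cap\Lambda_{\Im\sigma_{\ast}(\gamma)}\}_{\sigma\in\Gamma}$ into one locus inside $\mathbb{C}^{n}$, the proposition is essentially a change of notation. The one point deserving a word of care is that $\operatorname{Cl}$ is the Euclidean closure on $\mathbb{R}^{n}$ and is being applied to a set that a priori sits in $\mathbb{C}^{n}$; this is harmless precisely because $\Theta_{\gamma}\cap\mathbb{Q}^{n}$ already lies in $\mathbb{Q}^{n}$. The substantive work, namely proving \ref{paisolution2} itself, is what remains for the subsequent sections.
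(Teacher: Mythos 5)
Your argument is exactly the paper's: the two displayed chains preceding the proposition establish the unconditional identities $\Theta_{\gamma}\cap\mathbb{Q}^{n}=\Pi_{\gamma}\cap\mathbb{Q}^{n}$ and $\Theta_{\gamma}\cap\mathbb{R}^{n}=\bigcap_{\sigma\in\Gamma}(\Pi_{\Re\sigma_{\ast}(\gamma)}\cap\Lambda_{\Im\sigma_{\ast}(\gamma)})$, from which the equivalence of \ref{rewrite} and \ref{paisolution2} follows by direct substitution. Your re-derivation of the two chains (via $\Gamma$-equivariance of $\langle\cdot,\cdot\rangle$ and the real/imaginary decomposition) matches the paper's computation step for step.
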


By Proposition \ref{both} and Proposition \ref{both2}, we have that
\ref{completeanswer} is equivalent to \ref{paisolution2}.

\section{\textbf{Rationality for affine subspaces of }$\mathbb{C}^{n}%
$\textbf{.\label{definition}}}

\begin{notation}
\emph{In this section and the next section, we use the following notation:}

\emph{(a) For any field }$F$ \emph{and any subset }$M$ \emph{of }%
$\{1,\ldots,n\}$\emph{, }$F^{M}$ \emph{denotes the set of families of elements
of }$F$\emph{\ indexed by }$M$\emph{ (regarded as a }$F$\emph{-vector space by
defining addition and scalar multiplication as }$\left\{  x_{i}\right\}
_{i\in M}+\left\{  x_{i}^{\prime}\right\}  _{i\in M}=\left\{  x_{i}%
+x_{i}^{\prime}\right\}  _{i\in M}$\emph{\ and }$c\left\{  x_{i}\right\}
_{i\in M}=\left\{  cx_{i}\right\}  _{i\in M}$\emph{).}

\emph{(b) For any subset }$M$ \emph{of }$\{1,\ldots,n\}$\emph{, }$P_{M}%
$\emph{\ denotes the linear surjection }$\mathbb{C}^{n}\rightarrow
\mathbb{C}^{M}$\emph{\ given by }$(x_{i})_{1\leqslant i\leqslant n}%
\mapsto\left\{  x_{i}\right\}  _{i\in M}$.
\end{notation}

Let $U$ be a non-empty affine subspace of $\mathbb{C}^{n}$ and let $K$ be a
subfield of $\mathbb{C}$. In this section, we introduce the notion of
\emph{rationality} of $U$ over $K$, which plays important roles in the next
section. First we define three terms on $U$.

\begin{definition}
\label{frame}\emph{For any }$\beta_{0},\ldots,\beta_{m}\in\mathbb{C}^{n}%
$\emph{\ such that }%
\[
U=\beta_{0}+(\mathbb{C}\beta_{1}\oplus\cdots\oplus\mathbb{C}\beta_{m}),
\]
$(\beta_{0};\beta_{1},\ldots,\beta_{m})$\emph{\ is called a }frame\emph{\ for
}$U$\emph{.}
\end{definition}

\begin{definition}
\emph{A system of linear equations in }$x_{1},\ldots,x_{n}$ \emph{with
coefficients in }$\mathbb{C}$ \emph{is called a }defining system\emph{\ for\ }%
$U$ \emph{if it is a necessary and sufficient condition for }$(x_{i}%
)_{1\leqslant i\leqslant n}\in\mathbb{C}^{n}$ \emph{to be in }$U$\emph{, that
is, if }$U$ \emph{is the set of }$(x_{i})_{1\leqslant i\leqslant n}%
\in\mathbb{C}^{n}$\emph{\ satisfying it\ (note that, since }$U$\emph{ is
non-empty, any defining system for }$U$\emph{ is consistent).}
\end{definition}

\begin{definition}
\emph{A pair }$(M,f)$\emph{\ of a subset }$M$\emph{\ of }$\left\{
1,\ldots,n\right\}  $\emph{\ and an affine map }$f:$\emph{\ }$\mathbb{C}%
^{M}\rightarrow\mathbb{C}^{M^{c}}$\emph{\ is called a }defining
pair\emph{\ for }$U$\emph{\ if }%
\[
P_{M^{c}}(x)=f(P_{M}(x))
\]
\emph{is a necessary and sufficient condition for }$x\in\mathbb{C}^{n}$
\emph{to be in }$U$\emph{, that is, if }%
\begin{equation}
\left\{  x_{i}\right\}  _{i\in M^{c}}=f(\left\{  x_{i}\right\}  _{i\in M})
\tag*{(6)}\label{system}%
\end{equation}
\emph{is a defining system for }$U$\emph{. }
\end{definition}

\begin{remark}
\label{existence}\emph{As is clear from elementary linear algebra, there
exists a defining system for }$U$\emph{\ in reduced echelon form, which, when
solved for the leading variables in terms of the free variables, can be
written as \ref{system} for some subset }$M$\emph{\ of }$\left\{
1,\ldots,n\right\}  $\emph{\ and some affine map }$f:$\emph{\ }$\mathbb{C}%
^{M}\rightarrow\mathbb{C}^{M^{c}}$\emph{. Therefore there exist a subset }%
$M$\emph{\ of }$\left\{  1,\ldots,n\right\}  $\emph{\ and an affine map }%
$f:$\emph{\ }$\mathbb{C}^{M}\rightarrow\mathbb{C}^{M^{c}}$\emph{\ such that
\ref{system} is a defining system for }$U$\emph{, that is, there exists a
defining pair for }$U$\emph{.}
\end{remark}

Next we note that $U\cap K^{n}$ is clearly an affine subspace of $K^{n}$ and
prove two lemmas.

\begin{lemma}
\label{fn}Let $(\beta_{0};\beta_{1},\ldots,\beta_{m})$ be a frame for $U$.
Then we have
\[
\beta_{0},\ldots,\beta_{m}\in K^{n}\implies U\cap K^{n}=\beta_{0}+(K\beta
_{1}\oplus\cdots\oplus K\beta_{m}).
\]

\end{lemma}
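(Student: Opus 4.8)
The plan is to prove the two inclusions separately. For the inclusion $\supseteq$, suppose $\beta_0,\ldots,\beta_m \in K^n$. Any element of $\beta_0 + (K\beta_1 \oplus \cdots \oplus K\beta_m)$ is a $K$-linear combination (affine, with the $\beta_0$ shift) of vectors in $K^n$, hence lies in $K^n$; and since a frame expresses $U$ as the set of all $\mathbb{C}$-affine combinations, such an element also lies in $U$. So it lies in $U \cap K^n$. This direction is immediate from the definitions and requires no real work.

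The substance is the inclusion $\subseteq$: I must show that every point of $U \cap K^n$ can be written as $\beta_0 + \sum_{j=1}^m c_j \beta_j$ with the $c_j$ in $K$ (a priori the defining property of a frame only gives $c_j \in \mathbb{C}$). So let $x \in U \cap K^n$. Since $(\beta_0;\beta_1,\ldots,\beta_m)$ is a frame, there are unique scalars $c_1,\ldots,c_m \in \mathbb{C}$ with $x - \beta_0 = \sum_{j=1}^m c_j \beta_j$; uniqueness holds because $\beta_1,\ldots,\beta_m$ are $\mathbb{C}$-linearly independent (this is implicit in the direct-sum notation in Definition \ref{frame}). The task is to see these forced coefficients actually lie in $K$. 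Because $\beta_1,\ldots,\beta_m$ are $\mathbb{C}$-linearly independent, we may pick a subset $M \subseteq \{1,\ldots,n\}$ of size $m$ such that the $m \times m$ matrix formed by the rows (indexed by $M$) of the $n \times m$ matrix $[\beta_1 \mid \cdots \mid \beta_m]$ is invertible. Restricting the vector equation $x - \beta_0 = \sum_j c_j \beta_j$ to the coordinates in $M$ gives a square linear system $P_M(x - \beta_0) = B\,(c_1,\ldots,c_m)^{\mathsf T}$ with $B$ invertible; all entries of $B$ and of $P_M(x-\beta_0)$ lie in $K$ (the former since $\beta_j \in K^n$, the latter since $x \in K^n$ and $\beta_0 \in K^n$). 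Hence $(c_1,\ldots,c_m)^{\mathsf T} = B^{-1} P_M(x-\beta_0) \in K^m$ by Cramer's rule, since $B^{-1}$ has entries in $K$. Therefore $x \in \beta_0 + (K\beta_1 \oplus \cdots \oplus K\beta_m)$, as desired.

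The only mildly delicate point is the choice of $M$: one must justify that an $n \times m$ matrix of rank $m$ has an invertible $m \times m$ submatrix on some set of $m$ rows — standard linear algebra — and then observe that selecting those rows is exactly applying $P_M$. Everything else is bookkeeping. I would write the argument compactly, emphasizing the passage from the (possibly complex) coefficients $c_j$ to their membership in $K$ via the invertibility of $B$ over $K$; no step here is a genuine obstacle, and the lemma is really a warm-up establishing that a $K$-rational frame detects $U \cap K^n$ correctly, to be used in the rationality discussion that follows.
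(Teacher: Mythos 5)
Your proof is correct, but it takes a genuinely different route from the paper's. For the inclusion $U\cap K^{n}\subset\beta_{0}+(K\beta_{1}\oplus\cdots\oplus K\beta_{m})$, you extract a size-$m$ subset $M\subset\{1,\ldots,n\}$ on which the matrix $B$ with columns $P_{M}(\beta_{1}),\ldots,P_{M}(\beta_{m})$ is invertible, then solve $P_{M}(x-\beta_{0})=B(c_{1},\ldots,c_{m})^{\mathsf T}$ over $K$ via Cramer's rule; this is a concrete, coordinate-based argument whose only ingredient beyond bookkeeping is that an $n\times m$ matrix of rank $m$ with entries in $K$ has an invertible $m\times m$ submatrix over $K$ (rank being insensitive to field extension). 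The paper instead chooses a $K$-linear projection $p:\mathbb{C}\rightarrow K$, extends it coordinatewise to $p_{\ast}:\mathbb{C}^{n}\rightarrow K^{n}$, and observes that $p_{\ast}$ fixes $K^{n}$ pointwise and sends $\beta_{0}+(\mathbb{C}\beta_{1}\oplus\cdots\oplus\mathbb{C}\beta_{m})$ into $\beta_{0}+(K\beta_{1}\oplus\cdots\oplus K\beta_{m})$ because $p$ is $K$-linear and each $\beta_{j}\in K^{n}$; the inclusion then falls out of $U\cap K^{n}=p_{\ast}(U\cap K^{n})\subset p_{\ast}(U)$. The paper's argument is shorter and coordinate-free but leans on the existence of a $K$-complement to $K$ inside $\mathbb{C}$ (a Zorn's-lemma choice of basis in general); yours is longer to state but elementary and effectively constructive, never invoking such a complement. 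Both are fully rigorous, and either would serve the rationality machinery that follows.
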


\begin{proof}
Since%
\[
\beta_{0},\ldots,\beta_{m}\in K^{n}\implies U\cap K^{n}\supset\beta
_{0}+(K\beta_{1}\oplus\cdots\oplus K\beta_{m})
\]
is clear, we need only show%
\[
\beta_{0},\ldots,\beta_{m}\in K^{n}\implies U\cap K^{n}\subset\beta
_{0}+(K\beta_{1}\oplus\cdots\oplus K\beta_{m}).
\]
This holds because, letting $p$ be a $K$-linear projection from the $K$-vector
space $\mathbb{C}$ onto its subspace $K$ and defining the map $p_{\ast
}:\mathbb{C}^{n}\rightarrow K^{n}$ by $p_{\ast}((x_{i})_{1\leqslant i\leqslant
n})=(p(x_{i}))_{1\leqslant i\leqslant n}$, we have
\begin{align*}
\beta_{0},\ldots,\beta_{m}\in K^{n}\implies p_{\ast}(U)  &  =p_{\ast}%
(\beta_{0}+(\mathbb{C}\beta_{1}\oplus\cdots\oplus\mathbb{C}\beta_{m}))\\
&  =p_{\ast}(\beta_{0})+(p(\mathbb{C)}\beta_{1}\oplus\cdots\oplus
p(\mathbb{C)}\beta_{m})\\
&  =\beta_{0}+(K\beta_{1}\oplus\cdots\oplus K\beta_{m})
\end{align*}
and $U\cap K^{n}=p_{\ast}(U\cap K^{n})\subset p_{\ast}(U)$.
\end{proof}

\begin{lemma}
\label{dimrelation}Let $(M,f)$ be a defining pair for $U$. Then the following hold:

\emph{(a) }$U$ is isomorphic to $\mathbb{C}^{M}$.

\emph{(b) }$U\cap K^{n}$ is isomorphic to the affine subspace $K^{M}\cap
f^{-1}(K^{M^{c}})$ of $K^{M}$.
\end{lemma}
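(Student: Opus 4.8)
The plan is to verify both parts of Lemma~\ref{dimrelation} directly from the defining pair $(M,f)$, exploiting that the condition $P_{M^c}(x)=f(P_M(x))$ makes the $M^c$-coordinates of a point of $U$ an affine function of the $M$-coordinates. For part (a), I would introduce the map $P_M|_U:U\to\mathbb{C}^M$ together with the candidate inverse $g:\mathbb{C}^M\to\mathbb{C}^n$ defined by $g(y)=$ the unique point of $\mathbb{C}^n$ whose $M$-coordinates are $y$ and whose $M^c$-coordinates are $f(y)$; concretely, $P_M(g(y))=y$ and $P_{M^c}(g(y))=f(y)$. One checks that $g$ is affine (since $f$ is) and that its image lies in $U$ (because the defining equation \ref{system} is satisfied by construction), and that $P_M|_U$ and $g$ are mutually inverse: $P_M(g(y))=y$ is immediate, while $g(P_M(x))=x$ for $x\in U$ follows because both sides have the same $M$-coordinates and, using that $x\in U$, the same $M^c$-coordinates $f(P_M(x))=P_{M^c}(x)$. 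This gives an affine isomorphism $U\cong\mathbb{C}^M$.

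For part (b), I would restrict the same pair of maps to the rational points. If $x\in U\cap K^n$, then $P_M(x)\in K^M$; moreover $f(P_M(x))=P_{M^c}(x)\in K^{M^c}$, so $P_M(x)\in K^M\cap f^{-1}(K^{M^c})$. Conversely, if $y\in K^M\cap f^{-1}(K^{M^c})$, then $g(y)$ has $M$-coordinates $y\in K^M$ and $M^c$-coordinates $f(y)\in K^{M^c}$, hence $g(y)\in K^n$, and $g(y)\in U$ by part (a), so $g(y)\in U\cap K^n$. Thus $P_M|_U$ and $g$ restrict to mutually inverse maps between $U\cap K^n$ and $K^M\cap f^{-1}(K^{M^c})$. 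Since these restrictions are still affine, they give the desired affine isomorphism; and $K^M\cap f^{-1}(K^{M^c})$ is indeed an affine subspace of $K^M$, being the intersection of $K^M$ with the preimage of the affine subspace $K^{M^c}$ under the affine map $f$.

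I do not anticipate a serious obstacle here; the only point requiring a little care is to be explicit that "isomorphic" is meant in the category of affine spaces (i.e. via an affine bijection), so that the maps $g$ and $P_M|_U$ must be checked to be affine, not merely bijective — but this is immediate from the affineness of $f$. A secondary bookkeeping point is that $f^{-1}(K^{M^c})$ could in principle be empty if $f$ took no value in $K^{M^c}$; however $U\cap K^n$ is nonempty or empty accordingly, and the stated isomorphism holds in either case (the empty affine space being isomorphic only to itself), so no separate argument is needed.
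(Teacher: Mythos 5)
Your proposal is correct and follows essentially the same strategy as the paper: in both cases the isomorphism is the coordinate projection $P_M$ restricted to $U$ (or to $U\cap K^n$), with the defining-pair equation $P_{M^c}(x)=f(P_M(x))$ used to see that $P_M$ is bijective and that the image of $U\cap K^n$ is exactly $K^M\cap f^{-1}(K^{M^c})$. The only cosmetic difference is the order of the two parts: the paper proves (b) for a general subfield $K$ and deduces (a) as the case $K=\mathbb{C}$, whereas you prove (a) first by exhibiting the explicit affine inverse $g$ and then obtain (b) by restriction; the content and the key maps are identical.
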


\begin{proof}
Since any $x\in K^{n}$ satisfies $P_{M}(x)\in K^{M}$ and any $x,x^{\prime}\in
U$ satisfy
\begin{align*}
x=x^{\prime}  &  \iff(P_{M}(x)=P_{M}(x^{\prime}),\text{ }P_{M^{c}}%
(x)=P_{M^{c}}(x^{\prime}))\\
&  \iff(P_{M}(x)=P_{M}(x^{\prime}),\text{ }f(P_{M}(x))=f(P_{M}(x^{\prime})))\\
&  \iff P_{M}(x)=P_{M}(x^{\prime}),
\end{align*}
$P_{M}$ gives a one-to-one map $U\cap K^{n}\rightarrow K^{M}$. This map is
clearly affine, and its image is $K^{M}\cap f^{-1}(K^{M^{c}})$ because
\begin{align*}
&  P_{M}(U\cap K^{n})\\
&  =P_{M}(\{x\in K^{n}:P_{M^{c}}(x)=f(P_{M}(x))\})\\
&  =P_{M}(\{x\in\mathbb{C}^{n}:P_{M}(x)\in K^{M},\text{ }P_{M^{c}}(x)\in
K^{M^{c}},\text{ }P_{M^{c}}(x)=f(P_{M}(x))\})\\
&  =P_{M}(\{x\in\mathbb{C}^{n}:P_{M}(x)\in K^{M},\text{ }f(P_{M}(x))\in
K^{M^{c}},\text{ }P_{M^{c}}(x)=f(P_{M}(x))\})\\
&  =\{y\in\mathbb{C}^{M}:y\in K^{M},\text{ }f(y)\in K^{M^{c}}\}\\
&  =K^{M}\cap f^{-1}(K^{M^{c}}).
\end{align*}
Therefore there exists a one-to-one affine map $U\cap K^{n}\rightarrow K^{M}%
$\ whose image is $K^{M}\cap f^{-1}(K^{M^{c}})$, that is, (b) holds. (a) is
the case $K=\mathbb{C}$ of (b).
\end{proof}

Under these preparations, we prove a proposition and define the notion of
\emph{rationality }of $U$ over $K$.

\begin{proposition}
\label{rationality}The following conditions are equivalent:

\emph{(a) }There exists a frame $(\beta_{0};\beta_{1},\ldots,\beta_{m})$ for
$U$ satisfying $\beta_{0},\ldots,\beta_{m}\in K^{n}$.

\emph{(b) }There exists a defining system for $U$ with coefficients in $K$.

\emph{(c) }There exists a defining pair $(M,f)$ for $U$ satisfying
$f(K^{M})\subset K^{M^{c}}$.

\emph{(d) }Every defining pair $(M,f)$ for $U$ satisfies $f(K^{M})\subset
K^{M^{c}}$.\emph{\ }
\end{proposition}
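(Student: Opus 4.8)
The plan is to prove the cycle of implications $(a)\Rightarrow(b)\Rightarrow(c)\Rightarrow(d)\Rightarrow(a)$, using the structural lemmas already established. The implication $(d)\Rightarrow(c)$ is immediate once we know a defining pair exists at all, which is exactly Remark \ref{existence}; so the genuine content lies in the other three steps.

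For $(a)\Rightarrow(b)$: given a frame $(\beta_{0};\beta_{1},\ldots,\beta_{m})$ with all $\beta_{j}\in K^{n}$, the subspace $\mathbb{C}\beta_{1}\oplus\cdots\oplus\mathbb{C}\beta_{m}$ is the solution space of a homogeneous linear system obtained by row-reducing the matrix whose rows are $\beta_{1},\ldots,\beta_{m}$ and reading off the orthogonality conditions (equivalently, taking a basis of the annihilator of $\mathrm{span}_{K}(\beta_{1},\ldots,\beta_{m})$ inside $K^{n}$, which can be computed entirely over $K$). Appending the inhomogeneous terms $\langle\text{normal},\beta_{0}\rangle$ — again in $K$ — gives a defining system for $U=\beta_{0}+(\mathbb{C}\beta_{1}\oplus\cdots\oplus\mathbb{C}\beta_{m})$ with coefficients in $K$. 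For $(b)\Rightarrow(c)$: starting from a defining system with coefficients in $K$, run Gaussian elimination to reduced echelon form; since all arithmetic stays in $K$, the resulting system, solved for leading variables in terms of free variables as in Remark \ref{existence}, yields a defining pair $(M,f)$ in which the affine map $f$ has matrix and constant vector over $K$, hence $f(K^{M})\subset K^{M^{c}}$.

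The main step — and the one I expect to be the real obstacle — is $(c)\Rightarrow(a)$: from a single defining pair $(M,f)$ with $f(K^{M})\subset K^{M^{c}}$ we must manufacture a frame lying in $K^{n}$. Here is where Lemma \ref{dimrelation}(b) does the work. Because $f(K^{M})\subset K^{M^{c}}$, the affine subspace $K^{M}\cap f^{-1}(K^{M^{c}})$ of $K^{M}$ equals all of $K^{M}$, so by Lemma \ref{dimrelation}(b) the map $P_{M}$ restricts to an affine isomorphism $U\cap K^{n}\xrightarrow{\sim}K^{M}$; in particular $U\cap K^{n}$ is an $|M|$-dimensional affine subspace of $K^{n}$, and picking any point $\beta_{0}\in U\cap K^{n}$ together with a $K$-basis $\beta_{1},\ldots,\beta_{|M|}$ of its direction space (a $K$-subspace of $K^{n}$) gives vectors in $K^{n}$. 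It remains to check that $(\beta_{0};\beta_{1},\ldots,\beta_{|M|})$ is a frame for $U$ over $\mathbb{C}$, i.e. that $\beta_{1},\ldots,\beta_{|M|}$ stay $\mathbb{C}$-linearly independent and span the direction space of $U$ over $\mathbb{C}$; both follow by comparing dimensions, since the direction space of $U$ has $\mathbb{C}$-dimension $|M|$ by Lemma \ref{dimrelation}(a), and a set of $K$-linearly independent vectors in $K^{n}\subset\mathbb{C}^{n}$ spanning a $K$-space of dimension $|M|$ must be $\mathbb{C}$-linearly independent (clearing denominators from a hypothetical $\mathbb{C}$-dependence and applying a $K$-linear projection $\mathbb{C}\to K$ as in the proof of Lemma \ref{fn}). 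Assembling these four implications — with $(d)\Rightarrow(c)$ via Remark \ref{existence} — closes the loop and establishes the equivalence; one then defines $U$ to be \emph{rational over $K$} when these conditions hold.
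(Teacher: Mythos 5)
Your four arrows are individually correct, but they do not establish the proposition. You prove $(a)\Rightarrow(b)\Rightarrow(c)\Rightarrow(a)$ and $(d)\Rightarrow(c)$, which together give $(a)\Leftrightarrow(b)\Leftrightarrow(c)$ and $(d)\Rightarrow(a)$; but no implication of the form $(a)$, $(b)$, or $(c)\Rightarrow(d)$ ever appears, so $(d)$ is never shown to follow from the others. In the cycle you announce, $(a)\Rightarrow(b)\Rightarrow(c)\Rightarrow(d)\Rightarrow(a)$, the needed link is $(c)\Rightarrow(d)$, not $(d)\Rightarrow(c)$ --- the latter is the trivially true one via Remark \ref{existence} --- and $(c)\Rightarrow(d)$ is precisely where the real difficulty sits: $(d)$ is the only universally quantified condition (\emph{every} defining pair), whereas each of your constructions exhibits one good frame or one good defining pair and says nothing about the rest.

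The paper crosses this gap by interposing the pair-independent condition \ref{dimkdimc}, namely $\dim_K(U\cap K^n)=\dim_{\mathbb C}U$: Lemma \ref{fn} gives $(a)\Rightarrow$\ref{dimkdimc}, and Lemma \ref{dimrelation}(b), applied to an \emph{arbitrary} defining pair $(M,f)$, upgrades \ref{dimkdimc} to $K^{M}\cap f^{-1}(K^{M^{c}})=K^{M}$, i.e.\ $f(K^{M})\subset K^{M^{c}}$, which is $(d)$. You in fact already have the raw material: your $(c)\Rightarrow(a)$ step extracts $\dim_K(U\cap K^n)=|M|=\dim_{\mathbb C}U$ from the chosen good pair, which \emph{is} \ref{dimkdimc}; one further appeal to Lemma \ref{dimrelation}(b) for an arbitrary pair would deliver $(d)$ and close the argument. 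Incidentally, the paper runs the cycle the other way, $(d)\Rightarrow(c)\Rightarrow(b)\Rightarrow(a)\Rightarrow$\ref{dimkdimc}$\Rightarrow(d)$, and dispatches $(b)\Rightarrow(a)$ in one line by faithful flatness of $\mathbb C$ over $K$, avoiding your hands-on annihilator construction for $(a)\Rightarrow(b)$ (which is correct but heavier).
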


\begin{proof}
We prove this by showing the equivalence of (a), (b), (c), (d), and
\begin{equation}
\dim_{K}(U\cap K^{n})=\dim_{\mathbb{C}}U. \tag*{(7)}\label{dimkdimc}%
\end{equation}

(d)$\implies$(c): This is clear since, as was shown in Remark \ref{existence},
there exists a defining pair for $U$.

(c)$\implies$(b): This holds because, as is easily seen, every defining pair
$(M,f)$ for $U$ satisfies that if $f(K^{M})\subset K^{M^{c}}$, then
\ref{system} is a defining system for $U$ with coefficients in $K$.

(b)$\implies$(a): This is readily seen from the faithfully flatness of
$\mathbb{C}$ over $K$.

(a)$\implies$\ref{dimkdimc}: This holds because, by Lemma \ref{fn}, every
frame $(\beta_{0};\beta_{1},\ldots,\beta_{m})$ for $U$ satisfies
\begin{align*}
\beta_{0},\ldots,\beta_{m}\in K^{n}\implies\dim_{K}(U\cap K^{n})  &  =\dim
_{K}(\beta_{0}+(K\beta_{1}\oplus\cdots\oplus K\beta_{m}))\\
&  =\dim_{\mathbb{C}}(\beta_{0}+(\mathbb{C}\beta_{1}\oplus\cdots
\oplus\mathbb{C}\beta_{m}))\\
&  =\dim_{\mathbb{C}}U.
\end{align*}

\ref{dimkdimc}$\implies$(d): This holds because, by Lemma \ref{dimrelation},
every defining pair $(M,f)$ for $U$ satisfies
\begin{align*}
\text{\ref{dimkdimc}}  &  \iff\dim_{K}(K^{M}\cap f^{-1}(K^{M^{c}}%
))=\dim_{\mathbb{C}}\mathbb{C}^{M}\\
&  \iff\dim_{K}(K^{M}\cap f^{-1}(K^{M^{c}}))=\dim_{K}K^{M}\\
&  \iff K^{M}\cap f^{-1}(K^{M^{c}})=K^{M}\\
&  \iff K^{M}\subset f^{-1}(K^{M^{c}})\\
&  \iff f(K^{M})\subset K^{M^{c}}.
\end{align*}

\end{proof}

\begin{definition}
\label{rationality2}$U$\emph{\ is said to be }rational\ \emph{over\ }%
$K$\emph{\ if the equivalent conditions of Proposition \ref{rationality} hold
(compare this with Bourbaki \cite[p. 318, Definition 2]{bourbakili}).}
\end{definition}

\section{Proof of \ref{paisolution2}.\label{proof}}

\begin{notation}
\emph{In this section, we use the following notation:}

\emph{(a) For any subset }$M$ \emph{of }$\{1,\ldots,n\}$\emph{, }$I_{M}%
$\emph{\ denotes the bijection }$\mathbb{C}^{n}\rightarrow\mathbb{C}^{M}%
\times\mathbb{C}^{M^{c}}$\emph{\ given by }$(x_{i})_{1\leqslant i\leqslant
n}\mapsto(\left\{  x_{i}\right\}  _{i\in M},\left\{  x_{i}\right\}  _{i\in
M^{c}})$\emph{, that is, }$x\mapsto(P_{M}(x),P_{M^{c}}(x))$\emph{.}

\emph{(b) For any }$\tau\in\Gamma$\emph{\ and any subset }$M$\emph{\ of
}$\{1,\ldots,n\}$\emph{,} $\tau_{M}$ \emph{denotes the bijection }$\Omega
^{M}\rightarrow\Omega^{M}$\emph{\ given by }$\left\{  x_{i}\right\}  _{i\in
M}\mapsto\left\{  \tau(x_{i})\right\}  _{i\in M}$.

\emph{(c) For any }$\tau\in\Gamma$\emph{\ and any subset }$M$\emph{\ of
}$\{1,\ldots,n\}$\emph{, }$\tau_{M}^{-1}$\emph{ denotes }$(\tau_{M})^{-1}%
$\emph{, that is, }$(\tau^{-1})_{M}$\emph{. }

\emph{(d) For any }$\tau\in\Gamma$\emph{, }$\tau_{\ast}^{-1}$\emph{ denotes}
$(\tau_{\ast})^{-1}$\emph{, that is, }$(\tau^{-1})_{\ast}$\emph{.}
\end{notation}

In this section, we complete our proof of Theorem \ref{solution} by proving
that \ref{paisolution2}, which was shown to be equivalent to
\ref{completeanswer} in Section \ref{equivalent}, holds. Since the case
$\Theta_{\gamma}=\emptyset$ is trivial, we restrict ourselves to the case
$\Theta_{\gamma}\neq\emptyset$, in which $\Theta_{\gamma}$ is (since it is
clearly an affine subspace of $\mathbb{C}^{n}$) a non-empty affine subspace of
$\mathbb{C}^{n}$, that is, an example of $U$. First we prove two lemmas.

\begin{lemma}
\label{graph}If $U$ is rational over $\Omega$, every defining pair $(M,f)$ for
$U$ satisfies
\begin{equation}
I_{M}(\tau_{\ast}(U\cap\Omega^{n}))=\left\{  \left(  y,\tau_{M^{c}}(f(\tau
_{M}^{-1}(y)))\right)  :y\in\Omega^{M}\right\}  \tag*{(8)}\label{graph3}%
\end{equation}
for every $\tau\in\Gamma$.
\end{lemma}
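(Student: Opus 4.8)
The plan is to unwind the definitions on the left-hand side and push the automorphism $\tau_*$ through the graph description of $U\cap\Omega^n$ coming from the defining pair $(M,f)$. Since $(M,f)$ is a defining pair and $U$ is rational over $\Omega$, Proposition \ref{rationality}(c)--(d) gives $f(\Omega^M)\subset\Omega^{M^c}$, so by Lemma \ref{dimrelation}(b) the map $P_M$ identifies $U\cap\Omega^n$ with all of $\Omega^M$; concretely,
\[
U\cap\Omega^n=\{x\in\Omega^n:\{x_i\}_{i\in M^c}=f(\{x_i\}_{i\in M})\},
\]
and applying $I_M$ to this set yields exactly $\{(z,f(z)):z\in\Omega^M\}$. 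This is the starting point.

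Next I would apply $\tau_*$ and track what happens coordinate-wise. For $x\in U\cap\Omega^n$ with $P_M(x)=z$ and $P_{M^c}(x)=f(z)$, the point $\tau_*(x)$ has $M$-part $\tau_M(z)$ and $M^c$-part $\tau_{M^c}(f(z))$. Writing $y=\tau_M(z)$, so that $z=\tau_M^{-1}(y)$, the $M^c$-part becomes $\tau_{M^c}(f(\tau_M^{-1}(y)))$. As $x$ ranges over $U\cap\Omega^n$, $z$ ranges over all of $\Omega^M$ (that is the content of rationality), hence $y=\tau_M(z)$ also ranges over all of $\Omega^M$ since $\tau_M$ is a bijection of $\Omega^M$. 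Therefore
\[
I_M(\tau_*(U\cap\Omega^n))=\{(y,\tau_{M^c}(f(\tau_M^{-1}(y)))):y\in\Omega^M\},
\]
which is \ref{graph3}. The only genuine points needing care are the compatibility $I_M\circ\tau_*=(\tau_M\times\tau_{M^c})\circ I_M$, which is immediate from the definitions of $I_M$, $\tau_*$, $\tau_M$, $\tau_{M^c}$, and the surjectivity of $z\mapsto P_M(x)$ on $U\cap\Omega^n$, i.e.\ Lemma \ref{dimrelation}(b) together with rationality.

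I do not expect a serious obstacle here; the lemma is essentially a bookkeeping statement that "the image of a graph under a coordinate-wise field automorphism is again a graph, with the graphing function conjugated by the automorphism." The one thing to be vigilant about is that the right-hand side of \ref{graph3} is asserted to be indexed by \emph{all} $y\in\Omega^M$, which requires knowing $P_M$ maps $U\cap\Omega^n$ \emph{onto} $\Omega^M$; this uses rationality of $U$ over $\Omega$ in an essential way (without it the image would only be $\Omega^M\cap f^{-1}(\Omega^{M^c})$, a proper subspace in general). So I would make sure to invoke Lemma \ref{dimrelation}(b) and Proposition \ref{rationality} explicitly at that step rather than treat it as obvious.
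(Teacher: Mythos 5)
Your proof is correct and takes essentially the same approach as the paper's: both are the coordinate-wise bookkeeping computation showing that $I_M\circ\tau_*$ carries the graph $\{(z,f(z)):z\in\Omega^M\}$ to the graph $\{(y,\tau_{M^c}(f(\tau_M^{-1}(y)))):y\in\Omega^M\}$, with rationality over $\Omega$ supplying the fact that the graph is over all of $\Omega^M$ rather than only over $\Omega^M\cap f^{-1}(\Omega^{M^c})$. The only organizational difference is that you invoke rationality up front (via Proposition~\ref{rationality} and Lemma~\ref{dimrelation}(b)) to describe $I_M(U\cap\Omega^n)$ as a full graph before applying $\tau_*$, whereas the paper carries the constraint $f(\tau_M^{-1}(y))\in\Omega^{M^c}$ through the computation and discharges it at the end; this is a stylistic rearrangement, not a different argument.
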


\begin{proof}
For every $\tau\in\Gamma$, every defining pair $(M,f)$ for $U$ satisfies
\begin{align*}
&  I_{M}(\tau_{\ast}(U\cap\Omega^{n}))\\
&  =I_{M}(\tau_{\ast}(\{x\in\Omega^{n}:P_{M^{c}}(x)=f(P_{M}(x))\}))\\
&  =I_{M}(\{x\in\Omega^{n}:P_{M^{c}}(\tau_{\ast}^{-1}(x))=f(P_{M}(\tau_{\ast
}^{-1}(x)))\})\\
&  =I_{M}(\{x\in\Omega^{n}:\tau_{M^{c}}^{-1}(P_{M^{c}}(x))=f(\tau_{M}%
^{-1}(P_{M}(x)))\})\\
&  =I_{M}(\{x\in\mathbb{C}^{n}:P_{M}(x)\in\Omega^{M},\text{ }P_{M^{c}}%
(x)\in\Omega^{M^{c}},\text{ }\tau_{M^{c}}^{-1}(P_{M^{c}}(x))=f(\tau_{M}%
^{-1}(P_{M}(x)))\})\\
&  =\{(y,z)\in\mathbb{C}^{M}\times\mathbb{C}^{M^{c}}:y\in\Omega^{M},\text{
}z\in\Omega^{M^{c}},\text{ }\tau_{M^{c}}^{-1}(z)=f(\tau_{M}^{-1}(y))\}\\
&  =\{(y,z)\in\mathbb{C}^{M}\times\mathbb{C}^{M^{c}}:y\in\Omega^{M},\text{
}f(\tau_{M}^{-1}(y))\in\Omega^{M^{c}},\text{ }z=\tau_{M^{c}}(f(\tau_{M}%
^{-1}(y)))\}\\
&  =\left\{  \left(  y,\tau_{M^{c}}(f(\tau_{M}^{-1}(y)))\right)  :y\in
\Omega^{M},\text{ }f(\tau_{M}^{-1}(y))\in\Omega^{M^{c}}\right\}
\end{align*}
and hence
\begin{align*}
\text{\ref{graph3}}  &  \impliedby\forall y\in\Omega^{M}\ \ f(\tau_{M}%
^{-1}(y))\in\Omega^{M^{c}}\\
&  \iff\forall y\in\tau_{M}^{-1}(\Omega^{M})\ \ f(y)\in\Omega^{M^{c}}\\
&  \iff\forall y\in\Omega^{M}\ \ f(y)\in\Omega^{M^{c}}\text{ \ \ (}%
\because\tau_{M}^{-1}(\Omega^{M})=\Omega^{M}\text{)}\\
&  \iff f(\Omega^{M})\subset\Omega^{M^{c}}.
\end{align*}
Therefore every defining pair $(M,f)$ for $U$ satisfies \ref{graph3} for every
$\tau\in\Gamma$ if every defining pair $(M,f)$ for $U$ satisfies $f(\Omega
^{M})\subset\Omega^{M^{c}}$, that is, if $U$ is rational over $\Omega$\emph{.
}
\end{proof}

\begin{lemma}
\label{invariant}The set of invariants of $\Gamma\ $is equal to $\mathbb{Q}$.
\end{lemma}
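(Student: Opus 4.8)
The plan is to show the two inclusions $\mathbb{Q}\subseteq\Omega^{\Gamma}$ and $\Omega^{\Gamma}\subseteq\mathbb{Q}$, where $\Omega^{\Gamma}$ denotes the set of elements of $\Omega$ fixed by every $\sigma\in\Gamma$. The first inclusion is immediate: every $\sigma\in\Gamma$ is a $\mathbb{Q}$-automorphism, hence fixes $\mathbb{Q}$ pointwise by definition. So the content of the lemma is the reverse inclusion, and that is where essentially all the work lies.

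For $\Omega^{\Gamma}\subseteq\mathbb{Q}$, I would argue by contraposition: given $\alpha\in\Omega$ with $\alpha\notin\mathbb{Q}$, I must produce some $\sigma\in\Gamma$ with $\sigma(\alpha)\neq\alpha$. The key structural fact is that $\Omega$ is the algebraic closure of the finitely generated field $\mathbb{Q}(\gamma_{1},\ldots,\gamma_{n})$ inside $\mathbb{C}$; in particular $\Omega$ is an algebraic closure of $\mathbb{Q}$ as well (it is algebraic over $\mathbb{Q}$ and algebraically closed), so $\Omega/\mathbb{Q}$ is a (generally infinite) normal and separable extension — a Galois extension in the sense of infinite Galois theory — and $\Gamma$ is precisely its absolute Galois group. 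The standard fact from (infinite) Galois theory is then that the fixed field of the full automorphism group of a Galois extension is the base field; applying this to $\Omega/\mathbb{Q}$ gives $\Omega^{\Gamma}=\mathbb{Q}$ directly. If I prefer to avoid invoking infinite Galois theory as a black box, I can give the argument by hand: since $\alpha\notin\mathbb{Q}$ its minimal polynomial over $\mathbb{Q}$ has degree $\geq 2$ and, being separable, has a root $\alpha'\neq\alpha$ in $\Omega$; the $\mathbb{Q}$-isomorphism $\mathbb{Q}(\alpha)\to\mathbb{Q}(\alpha')$ sending $\alpha\mapsto\alpha'$ extends, since $\Omega$ is an algebraic closure of $\mathbb{Q}(\alpha)$ and of $\mathbb{Q}(\alpha')$, to an automorphism $\sigma$ of $\Omega$ fixing $\mathbb{Q}$; this $\sigma$ lies in $\Gamma$ and moves $\alpha$.

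The one point that deserves care — and is the main obstacle — is the interaction between the two roles of $\Omega$: it was defined as the algebraic closure of $\mathbb{Q}(\gamma_1,\ldots,\gamma_n)$, and I am using that it is simultaneously an algebraic closure of $\mathbb{Q}$ and, crucially, that the extension automorphism $\sigma$ constructed above can be taken to be an automorphism of this particular field $\Omega$ (not merely of some abstract algebraic closure). This is fine because any two algebraic closures of $\mathbb{Q}(\alpha)$ are isomorphic over $\mathbb{Q}(\alpha)$, and $\Omega$ is one such; concretely, extend the embedding $\mathbb{Q}(\alpha)\hookrightarrow\Omega$, $\alpha\mapsto\alpha'$, to an embedding $\Omega\hookrightarrow\Omega$ (possible since $\Omega$ is algebraic over $\mathbb{Q}(\alpha)$ and $\Omega$ is algebraically closed), and note that such an embedding of an algebraic-closure-over-$\mathbb{Q}$ into itself is necessarily surjective, hence an automorphism. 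I would spell out this extension-and-surjectivity step explicitly, since it is the only non-formal ingredient; everything else is definitional.
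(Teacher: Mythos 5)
Your argument has a genuine gap at its foundation: you assert that $\Omega$ ``is algebraic over $\mathbb{Q}$ and algebraically closed,'' and hence is an algebraic closure of $\mathbb{Q}$. This is false in general. $\Omega$ is defined as the algebraic closure \emph{of $\mathbb{Q}(\gamma_1,\ldots,\gamma_n)$} in $\mathbb{C}$, and the coordinates $\gamma_i$ of the center are arbitrary real numbers --- the whole point of the paper is that the center is \emph{not} rational, and nothing forces it to be algebraic. If, say, $\gamma_1=\pi$, then $\Omega\supseteq\mathbb{Q}(\pi)$ is not algebraic over $\mathbb{Q}$, it is not ``the'' algebraic closure of $\mathbb{Q}$, and $\Omega/\mathbb{Q}$ is not a Galois extension. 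Consequently the infinite-Galois-theory black box you invoke does not apply, and your by-hand argument also breaks in two places: (i) an element $\alpha\in\Omega\setminus\mathbb{Q}$ need not be algebraic over $\mathbb{Q}$ at all, so there may be no ``minimal polynomial over $\mathbb{Q}$'' and no conjugate $\alpha'$ to move it to; (ii) even when $\alpha$ is algebraic, your surjectivity step --- ``an embedding of an algebraic-closure-over-$\mathbb{Q}$ into itself is necessarily surjective'' --- is justified by algebraicity over $\mathbb{Q}$, which fails here. (Surjectivity can still be rescued via a transcendence-degree count, since $\Omega$ has finite transcendence degree over $\mathbb{Q}$, but you do not argue this, and it is a different argument than the one you give.)

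What the lemma actually needs, and what the paper uses, is the stronger statement that for \emph{any} algebraically closed extension $\Omega$ of $\mathbb{Q}$ --- not necessarily algebraic over $\mathbb{Q}$ --- the fixed field of $\operatorname{Aut}(\Omega/\mathbb{Q})$ is $\mathbb{Q}$. The paper simply cites this from Bourbaki (Algebra II, Ch.~V, Prop.~10). If you want a self-contained argument, you must split into two cases for $\alpha\in\Omega\setminus\mathbb{Q}$: if $\alpha$ is algebraic over $\mathbb{Q}$, use the conjugate-root argument as you did, but then extend the resulting $\mathbb{Q}$-isomorphism to an automorphism of all of $\Omega$ by first extending across a transcendence basis of $\Omega$ over $\overline{\mathbb{Q}}\cap\Omega$ and then over the algebraic part; if $\alpha$ is transcendental over $\mathbb{Q}$, send $\alpha\mapsto-\alpha$ (a $\mathbb{Q}$-automorphism of $\mathbb{Q}(\alpha)$ moving $\alpha$) and again extend to an automorphism of $\Omega$. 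Without this case distinction and the care about transcendence, the proof does not go through.
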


\begin{proof}
By Bourbaki \cite[Chapter \textrm{V}, p. 112, Proposition 10]{bourbakifi}, the
set of invariants of the group of $\mathbb{Q}$-automorphisms of any
algebraically closed extension of $\mathbb{Q}$ is equal to $\mathbb{Q}$. Since
$\Omega$, of which $\Gamma$ is the group of $\mathbb{Q}$-automorphisms, is an
algebraically closed extension of $\mathbb{Q}$, this implies the lemma.
\end{proof}

Using Lemma \ref{fn}\ and these two lemmas, we prove

\begin{proposition}
\label{wqw}If $U$ is rational over $\Omega$ and
\begin{equation}
\forall\tau\in\Gamma\text{ \ }\tau_{\ast}(U\cap\Omega^{n})=U\cap\Omega^{n},
\tag*{(9)}\label{invariance}%
\end{equation}
then
\begin{equation}
\operatorname{Cl}(U\cap\mathbb{Q}^{n})=U\cap\mathbb{R}^{n}. \tag*{(10)}%
\label{wdash}%
\end{equation}

\end{proposition}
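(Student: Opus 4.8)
The plan is first to upgrade the hypothesis ``$U$ is rational over $\Omega$'' to ``$U$ is rational over $\mathbb{Q}$'', and then to finish by an elementary density argument. Suppose for the moment that $U$ is rational over $\mathbb{Q}$. Then by Proposition~\ref{rationality}(a) there is a frame $(\beta_{0};\beta_{1},\ldots,\beta_{m})$ for $U$ with $\beta_{0},\ldots,\beta_{m}\in\mathbb{Q}^{n}$. Applying Lemma~\ref{fn} with $K=\mathbb{Q}$ and with $K=\mathbb{R}$ (the latter is legitimate since $\mathbb{Q}^{n}\subset\mathbb{R}^{n}$) gives $U\cap\mathbb{Q}^{n}=\beta_{0}+(\mathbb{Q}\beta_{1}\oplus\cdots\oplus\mathbb{Q}\beta_{m})$ and $U\cap\mathbb{R}^{n}=\beta_{0}+(\mathbb{R}\beta_{1}\oplus\cdots\oplus\mathbb{R}\beta_{m})$. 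The affine map $\mathbb{R}^{m}\to\mathbb{R}^{n}$, $(t_{1},\ldots,t_{m})\mapsto\beta_{0}+\sum_{i}t_{i}\beta_{i}$, is continuous and injective (because $\beta_{1},\ldots,\beta_{m}$ are linearly independent), hence a homeomorphism onto the closed set $U\cap\mathbb{R}^{n}$, and it carries the dense subset $\mathbb{Q}^{m}$ of $\mathbb{R}^{m}$ onto $U\cap\mathbb{Q}^{n}$; therefore $\operatorname{Cl}(U\cap\mathbb{Q}^{n})=U\cap\mathbb{R}^{n}$, which is \ref{wdash}. (This subsumes the trivial inclusion $\operatorname{Cl}(U\cap\mathbb{Q}^{n})\subset U\cap\mathbb{R}^{n}$, which in any case holds because $U\cap\mathbb{R}^{n}$ is closed in $\mathbb{R}^{n}$ and contains $U\cap\mathbb{Q}^{n}$.) So everything reduces to proving that $U$ is rational over $\mathbb{Q}$.

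To prove this I would verify condition (d) of Proposition~\ref{rationality} over $\mathbb{Q}$. Take any defining pair $(M,f)$ for $U$ (one exists by Remark~\ref{existence}). Since $U$ is rational over $\Omega$, Proposition~\ref{rationality}(d) gives $f(\Omega^{M})\subset\Omega^{M^{c}}$, so Lemma~\ref{graph} applies and yields, for every $\tau\in\Gamma$, the graph description $I_{M}(\tau_{\ast}(U\cap\Omega^{n}))=\{(y,\tau_{M^{c}}(f(\tau_{M}^{-1}(y)))):y\in\Omega^{M}\}$; the special case $\tau=\mathrm{id}$ reads $I_{M}(U\cap\Omega^{n})=\{(y,f(y)):y\in\Omega^{M}\}$. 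Now the hypothesis \ref{invariance} says $\tau_{\ast}(U\cap\Omega^{n})=U\cap\Omega^{n}$, so these two graphs over $\Omega^{M}$ coincide, which forces $f(y)=\tau_{M^{c}}(f(\tau_{M}^{-1}(y)))$ for all $y\in\Omega^{M}$ and all $\tau\in\Gamma$; replacing $y$ by $\tau_{M}(y)$ converts this into $f(\tau_{M}(y))=\tau_{M^{c}}(f(y))$. Finally, for $y\in\mathbb{Q}^{M}$ one has $\tau_{M}(y)=y$ for every $\tau\in\Gamma$, hence $f(y)=\tau_{M^{c}}(f(y))$ for every $\tau$; thus each component of $f(y)$ lies in the set of invariants of $\Gamma$, which equals $\mathbb{Q}$ by Lemma~\ref{invariant}, i.e. $f(y)\in\mathbb{Q}^{M^{c}}$. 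Therefore $f(\mathbb{Q}^{M})\subset\mathbb{Q}^{M^{c}}$, so $U$ is rational over $\mathbb{Q}$ by Proposition~\ref{rationality}, and the proof is complete.

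I expect the crux to be the middle step: converting the purely set-theoretic invariance \ref{invariance} of $U\cap\Omega^{n}$ into the functional equation $f\circ\tau_{M}=\tau_{M^{c}}\circ f$ for the defining map $f$. The conceptual work is already packaged in Lemma~\ref{graph} together with Lemma~\ref{invariant}, but one has to be careful with the bookkeeping of $\tau_{M}$, $\tau_{M}^{-1}$ and $\tau_{M^{c}}$ and with the direction in which the equation is read. Once the functional equation is secured, specializing to rational $y$ (on which $\tau_{M}$ acts trivially) and invoking Lemma~\ref{invariant} is immediate, and the concluding density argument is routine.
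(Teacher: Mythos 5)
Your proof is correct and follows essentially the same route as the paper: reduce to showing $U$ is rational over $\mathbb{Q}$ via Lemma~\ref{fn} for the closure step, then deduce $\mathbb{Q}$-rationality from the graph description of Lemma~\ref{graph}, the invariance hypothesis, and Lemma~\ref{invariant} applied at rational points where $\tau_M$ acts trivially. Your only departure from the paper is cosmetic — you spell out the density step via the explicit affine homeomorphism $\mathbb{R}^m\to U\cap\mathbb{R}^n$ and substitute $y\mapsto\tau_M(y)$ to display the functional equation $f\circ\tau_M=\tau_{M^c}\circ f$ before specializing to $\mathbb{Q}^M$, where the paper instead specializes directly.
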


\begin{proof}
We prove this by showing that
\begin{equation}
\text{\ref{wdash} holds if\emph{\ }}U\text{\emph{\ }is rational over
}\mathbb{Q\label{rationalq}} \tag*{(11)}%
\end{equation}
and that
\begin{equation}
U\text{ is rational over }\mathbb{Q}\text{ if }U\text{ is rational over
}\Omega\text{ and\emph{\ }\ref{invariance} holds.\label{rationalq2}}
\tag*{(12)}%
\end{equation}

\ref{rationalq} is shown as follows: By Lemma \ref{fn}, every frame
$(\beta_{0};\beta_{1},\ldots,\beta_{m})$ for $U$ satisfies
\begin{align*}
\beta_{0},\ldots,\beta_{m}\in\mathbb{Q}^{n}  &  \implies U\cap\mathbb{Q}%
^{n}=\beta_{0}+(\mathbb{Q}\beta_{1}\oplus\cdots\oplus\mathbb{Q}\beta_{m}),\\
\beta_{0},\ldots,\beta_{m}\in\mathbb{R}^{n}  &  \implies U\cap\mathbb{R}%
^{n}=\beta_{0}+(\mathbb{R}\beta_{1}\oplus\cdots\oplus\mathbb{R}\beta_{m})
\end{align*}
and hence
\begin{align*}
\beta_{0},\ldots,\beta_{m}\in\mathbb{Q}^{n}\implies\operatorname{Cl}%
(U\cap\mathbb{Q}^{n})  &  =\operatorname{Cl}(\beta_{0}+(\mathbb{Q}\beta
_{1}\oplus\cdots\oplus\mathbb{Q}\beta_{m}))\\
&  =\beta_{0}+(\mathbb{R}\beta_{1}\oplus\cdots\oplus\mathbb{R}\beta_{m})\\
&  =U\cap\mathbb{R}^{n}\text{.}%
\end{align*}
Therefore \ref{wdash} holds if there exists a frame $(\beta_{0};\beta
_{1},\ldots,\beta_{m})$ for $U$ satisfying $\beta_{0},\ldots,\beta_{m}%
\in\mathbb{Q}^{n}$, that is, if $U$\emph{\ }is rational over $\mathbb{Q}$.

\ref{rationalq2} is shown as follows: If $U$ is rational over $\Omega$, then,
by Lemma \ref{graph}, every defining pair $(M,f)$ for $U$ satisfies
\begin{align}
\text{\ref{invariance}}  &  \iff\forall\tau\in\Gamma\text{ \ }I_{M}(\tau
_{\ast}(U\cap\Omega^{n}))=I_{M}(U\cap\Omega^{n})\nonumber\\
&  \iff\forall\tau\in\Gamma\text{ \ }\left\{  \left(  y,\tau_{M^{c}}%
(f(\tau_{M}^{-1}(y)))\right)  :y\in\Omega^{M}\right\}  =\{\left(
y,f(y)\right)  :y\in\Omega^{M}\}\nonumber\\
&  \iff\forall\tau\in\Gamma\text{ \ }\forall y\in\Omega^{M}\text{ \ }%
\tau_{M^{c}}(f(\tau_{M}^{-1}(y)))=f(y)\nonumber\\
&  \implies\forall\tau\in\Gamma\text{ \ }\forall y\in\mathbb{Q}^{M}\text{
\ }\tau_{M^{c}}(f(\tau_{M}^{-1}(y)))=f(y)\nonumber\\
&  \iff\forall\tau\in\Gamma\text{ \ }\forall y\in\mathbb{Q}^{M}\text{ \ }%
\tau_{M^{c}}(f(y))=f(y)\nonumber\\
&  \iff\forall y\in\mathbb{Q}^{M}\ \text{\ }f(y)\in\mathbb{Q}^{M^{c}}\text{
\ \ (by Lemma \ref{invariant})}\nonumber\\
&  \iff f(\mathbb{Q}^{M})\subset\mathbb{Q}^{M^{c}}. \tag*{(13)}\label{taufyfy}%
\end{align}
Therefore if $U$ is rational over $\Omega$ and\emph{\ }\ref{invariance} holds,
then every defining pair $(M,f)$ for $U$ satisfies $f(\mathbb{Q}^{M}%
)\subset\mathbb{Q}^{M^{c}}$, that is, $U$\emph{\ }is rational over
$\mathbb{Q}$.
\end{proof}

\begin{remark}
\emph{Noting that }$f$\emph{\ is an affine map and using the obvious fact that
}each element of $\Omega^{M}$\emph{\ }can be written as an affine combination
(over $\Omega$) of elements of $\mathbb{Q}^{M}$\emph{, we can easily justify
replacing `}$\implies$\emph{'\ in \ref{taufyfy} by `}$\iff$\emph{', which
changes the first sentence of the proof of \ref{rationalq2} to imply not only
that }if $U$ is rational over $\Omega$ and\emph{\ }\ref{invariance} holds,
then every defining pair $(M,f)$ for $U$ satisfies $f(\mathbb{Q}^{M}%
)\subset\mathbb{Q}^{M^{c}}$, that is, $U$\emph{\ }is rational over
$\mathbb{Q}$\emph{\ but also that }if $U$ is rational over $\Omega$, then
\ref{invariance} holds if there exists a defining pair $(M,f)$ for $U$
satisfying\emph{\ }$f(\mathbb{Q}^{M})\subset\mathbb{Q}^{M^{c}}$, that is,
if\emph{\ }$U$\emph{\ }is rational over\emph{\ }$\mathbb{Q}$\emph{, which and
the obvious fact that }$U$ is rational over $\Omega$\emph{\ }if\emph{\ }%
$U$\emph{\ }is rational over\emph{\ }$\mathbb{Q}$\emph{\ imply the converse of
\ref{rationalq2}. Hence we have not only \ref{rationalq2} but also its
converse, that is, }$U$ is rational over $\mathbb{Q}$ if and only if $U$ is
rational over $\Omega$ and\emph{\ }\ref{invariance} holds\emph{\ (compare this
with Bourbaki \cite[p. 324, Theorem 1 (i)]{bourbakili}).}
\end{remark}

Now we prove that $U=\Theta_{\gamma}$ satisfies the rationality of $U$ over
$\Omega$ and \ref{invariance}, that is, the following holds, which, by
Proposition \ref{wqw}, implies that $U=\Theta_{\gamma}$ satisfies \ref{wdash},
that is, \ref{paisolution2} holds.

\begin{proposition}
\label{wgamma}$\Theta_{\gamma}$ is rational over $\Omega$ and%
\begin{equation}
\forall\tau\in\Gamma\text{ \ }\tau_{\ast}(\Theta_{\gamma}\cap\Omega
^{n})=\Theta_{\gamma}\cap\Omega^{n}. \tag*{(14)}\label{symmetry}%
\end{equation}

\end{proposition}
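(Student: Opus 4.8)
The plan is to verify the two assertions separately; both reduce to unwinding the definition of $\Theta_{\gamma}$ with the help of two facts, namely that $b\in\mathbb{Q}^{n}$ is fixed by every $\sigma\in\Gamma$ (so that $\sigma_{\ast}(b)=b$, whence $\sigma_{\ast}(x)-b=\sigma_{\ast}(x-b)$ for $x\in\Omega^{n}$, using that each $\sigma_{\ast}$ is additive), and that $\left\langle \cdot,\cdot\right\rangle $ intertwines with the maps $\sigma_{\ast}$ as recorded in Notation \ref{innerproduct}.

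For the rationality of $\Theta_{\gamma}$ over $\Omega$, I would exhibit a defining system with coefficients in $\Omega$ and invoke condition (b) of Proposition \ref{rationality}. By the very definition of $\Theta_{\gamma}$, the family of linear equations $\left\langle \sigma_{\ast}(\gamma)-b,x-b\right\rangle =1$, indexed by $\sigma\in\Gamma$, is a defining system for $\Theta_{\gamma}$, and rewriting its $\sigma$-th member as
\[
\sum_{i=1}^{n}(\sigma(\gamma_{i})-b_{i})x_{i}=1+\sum_{i=1}^{n}(\sigma(\gamma_{i})-b_{i})b_{i}
\]
displays all of its coefficients as elements of $\Omega$, since $\gamma\in\Omega^{n}$ and $b\in\mathbb{Q}^{n}\subset\Omega^{n}$. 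As we are in the case $\Theta_{\gamma}\neq\emptyset$, so that $\Theta_{\gamma}$ is a non-empty affine subspace of $\mathbb{C}^{n}$ and Proposition \ref{rationality} applies, this proves $\Theta_{\gamma}$ rational over $\Omega$.

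For \ref{symmetry}, I would fix $\tau\in\Gamma$ and first prove $\tau_{\ast}(\Theta_{\gamma}\cap\Omega^{n})\subset\Theta_{\gamma}\cap\Omega^{n}$; applying this inclusion with $\tau^{-1}$ in place of $\tau$ and then $\tau_{\ast}$ to both sides yields the reverse inclusion, hence equality. To prove the inclusion, take $x\in\Theta_{\gamma}\cap\Omega^{n}$, so that $\tau_{\ast}(x)\in\Omega^{n}$; then for an arbitrary $\sigma\in\Gamma$, using the two facts above together with the relation $\sigma_{\ast}(\gamma)-b=\tau_{\ast}\!\left((\tau^{-1}\sigma)_{\ast}(\gamma)-b\right)$ (which follows from $\tau_{\ast}\circ(\tau^{-1}\sigma)_{\ast}=\sigma_{\ast}$, $\tau_{\ast}(b)=b$, and additivity of $\tau_{\ast}$), I compute
\begin{align*}
\left\langle \sigma_{\ast}(\gamma)-b,\tau_{\ast}(x)-b\right\rangle  &  =\left\langle \tau_{\ast}\!\left((\tau^{-1}\sigma)_{\ast}(\gamma)-b\right),\tau_{\ast}(x-b)\right\rangle \\
&  =\tau\!\left(\left\langle (\tau^{-1}\sigma)_{\ast}(\gamma)-b,x-b\right\rangle \right)=\tau(1)=1,
\end{align*}
where the penultimate step uses $\tau^{-1}\sigma\in\Gamma$ and $x\in\Theta_{\gamma}\cap\Omega^{n}$. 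Since $\sigma$ was arbitrary, $\tau_{\ast}(x)\in\Theta_{\gamma}\cap\Omega^{n}$, as required.

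I do not anticipate a real obstacle: the whole argument is the bookkeeping just sketched, the single point to watch being that it is the rationality of $b$ that makes $\tau_{\ast}$ fix $b$ and that makes the reindexing $\sigma\mapsto\tau^{-1}\sigma$ --- a bijection of $\Gamma$ onto itself --- send the defining condition of $\Theta_{\gamma}$ to itself. Once Proposition \ref{wgamma} is established, Proposition \ref{wqw} applied to $U=\Theta_{\gamma}$ gives \ref{wdash} for $U=\Theta_{\gamma}$, i.e.\ \ref{paisolution2}, and then, by Section \ref{equivalent}, Theorem \ref{solution} follows.
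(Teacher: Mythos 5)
Your proposal is correct and essentially matches the paper's own proof. The rationality argument is identical: exhibit the defining system $\sum_{i=1}^{n}(\sigma(\gamma_{i})-b_{i})x_{i}=1+\left\langle \sigma_{\ast}(\gamma)-b,b\right\rangle$ (for $\sigma\in\Gamma$) with coefficients in $\Omega$ and invoke condition (b) of Proposition \ref{rationality}. For \ref{symmetry}, the paper computes $\tau_{\ast}(\Theta_{\gamma}\cap\Omega^{n})$ directly as a chain of set equalities, using the same two ingredients you use — that $\tau_{\ast}$ fixes $b$ and intertwines with $\left\langle\cdot,\cdot\right\rangle$, plus the reindexing $\tau\Gamma=\Gamma$ — and arrives at $\Theta_{\gamma}\cap\Omega^{n}$ in one pass; you instead prove the inclusion $\tau_{\ast}(\Theta_{\gamma}\cap\Omega^{n})\subset\Theta_{\gamma}\cap\Omega^{n}$ pointwise and obtain the reverse inclusion by replacing $\tau$ with $\tau^{-1}$, which is a cosmetic reorganization of the same idea rather than a different route.
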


\begin{proof}
Since
\begin{align*}
\Theta_{\gamma}  &  =\{x\in\mathbb{C}^{n}:\forall\sigma\in\Gamma\text{
}\left\langle \sigma_{\ast}(\gamma)-b,x-b\right\rangle =1\}\\
&  =\{x\in\mathbb{C}^{n}:\forall\sigma\in\Gamma\text{ }\left\langle
\sigma_{\ast}(\gamma)-b,x\right\rangle =1+\left\langle \sigma_{\ast}%
(\gamma)-b,b\right\rangle \}\\
&  =\left\{  (x_{i})_{1\leqslant i\leqslant n}\in\mathbb{C}^{n}:\forall
\sigma\in\Gamma\text{ }\sum_{i=1}^{n}(\sigma(\gamma_{i})-b_{i})x_{i}%
=1+\left\langle \sigma_{\ast}(\gamma)-b,b\right\rangle \right\}  ,
\end{align*}
a defining system for $\Theta_{\gamma}$ is
\[
\forall\sigma\in\Gamma\text{ \ }\sum_{i=1}^{n}(\sigma(\gamma_{i})-b_{i}%
)x_{i}=1+\left\langle \sigma_{\ast}(\gamma)-b,b\right\rangle ,
\]
which is with coefficients in $\Omega$. Hence there exists a defining system
for $\Theta_{\gamma}$ with coefficients in $\Omega$, that is, $\Theta_{\gamma
}$ is rational over $\Omega$. \ref{symmetry} holds because
\begin{align*}
\forall\tau\in\Gamma\text{ \ }\tau_{\ast}(\Theta_{\gamma}\cap\Omega^{n})  &
=\tau_{\ast}(\{x\in\Omega^{n}:\forall\sigma\in\Gamma\text{ }\left\langle
\sigma_{\ast}(\gamma)-b,x-b\right\rangle =1\})\\
&  =\left\{  x\in\Omega^{n}:\forall\sigma\in\Gamma\text{ }\left\langle
\sigma_{\ast}(\gamma)-b,\tau_{\ast}^{-1}(x)-b\right\rangle =1\right\} \\
&  =\left\{  x\in\Omega^{n}:\forall\sigma\in\Gamma\text{ }\left\langle
\tau_{\ast}^{-1}((\tau\sigma)_{\ast}(\gamma)-b),\tau_{\ast}^{-1}%
(x-b)\right\rangle =1\right\} \\
&  =\{x\in\Omega^{n}:\forall\sigma\in\Gamma\text{ }\tau^{-1}(\left\langle
(\tau\sigma)_{\ast}(\gamma)-b,x-b\right\rangle )=1\}\\
&  =\{x\in\Omega^{n}:\forall\sigma\in\Gamma\text{ }\left\langle (\tau
\sigma)_{\ast}(\gamma)-b,x-b\right\rangle =1\}\\
&  =\{x\in\Omega^{n}:\forall\sigma\in\tau\Gamma\text{ }\left\langle
\sigma_{\ast}(\gamma)-b,x-b\right\rangle =1\}\\
&  =\{x\in\Omega^{n}:\forall\sigma\in\Gamma\text{ }\left\langle \sigma_{\ast
}(\gamma)-b,x-b\right\rangle =1\}\text{ \ \ (}\because\tau\Gamma
=\Gamma\text{)}\\
&  =\Theta_{\gamma}\cap\Omega^{n}.
\end{align*}

\end{proof}

\begin{remark}
\label{rtoc}\emph{Let }$a=(a_{i})_{1\leqslant i\leqslant n}$\emph{\ be an
element of }$\Omega^{n}$\emph{. Then, replacing }$\gamma$\emph{\ (resp.
}$\gamma_{i}$\emph{) by }$a$\emph{\ (resp. }$a_{i}$\emph{) everywhere in our
proof of \ref{paisolution2}, we obtain }%
\[
\operatorname{Cl}(\Theta_{a}\cap\mathbb{Q}^{n})=\Theta_{a}\cap\mathbb{R}^{n},
\]
\emph{which, by modifying the proof of Proposition \ref{both2}, is shown to be
equivalent to \ }%
\[
\operatorname{Cl}(\Pi_{\Re a}\cap\Lambda_{\Im a}\cap\mathbb{Q}^{n}%
)=\bigcap_{\sigma\in\Gamma}(\Pi_{\Re\sigma_{\ast}(a)}\cap\Lambda_{\Im
\sigma_{\ast}(a)}),
\]
\emph{which, by modifying the proof of Proposition \ref{both}, is shown to be
equivalent to }%
\[
\operatorname{Cl}(S_{\Re a}\cap\Lambda_{\Im a}\cap\mathbb{Q}^{n}%
)=\bigcap_{\sigma\in\Gamma}(S_{\Re\sigma_{\ast}(a)}\cap\Lambda_{\Im
\sigma_{\ast}(a)}).
\]
\emph{\ref{completeanswer} is the case }$a=\gamma$\emph{\ of this last
formula.}
\end{remark}

\begin{remark}
\emph{Let }$q$ \emph{be a non-degenerate definite quadratic form }%
$\mathbb{R}^{n}\rightarrow\mathbb{R}$\emph{\ such that }$q(\mathbb{Q}%
^{n})\subset\mathbb{Q}$\emph{, and }$B$ \emph{the polar form of }$q$\emph{,
that is, the symmetric bilinear form }$\mathbb{R}^{n}\times\mathbb{R}%
^{n}\rightarrow\mathbb{R}$\emph{ that is defined by }$B(x,y)\equiv\frac{1}%
{2}(q(x+y)-q(x)-q(y))$\emph{ (and hence satisfies }$B(x,x)\equiv q(x)$\emph{),
and }$L$ \emph{the linear map }$\mathbb{R}^{n}\rightarrow\mathbb{R}^{n}%
$\emph{\ such that }$B(x,y)\equiv\left\langle L(x),y\right\rangle
$\emph{\ (note that an example of }$q$\emph{ is }$q(x)\equiv\left\vert
x\right\vert ^{2}$\emph{, which implies }$B(x,y)\equiv\left\langle
x,y\right\rangle $\emph{ and hence }$L(x)\equiv x$\emph{). Then, as is easily
proved, the map }%
\begin{equation}
\mathbb{R}^{n}-\{b\}\rightarrow\mathbb{R}^{n}-\{b\};\text{ \ \ \ }x\mapsto
b+\frac{2}{q(L^{-1}(x-b))}L^{-1}(x-b) \tag*{(15)}\label{phaiq}%
\end{equation}
\emph{is well-defined, and is a homeomorphism with respect to the Euclidean
topology (with the inverse }$x\mapsto b+\frac{2}{q(x-b)}L(x-b)$\emph{), and
maps }$\Pi_{a}$\emph{\ (resp. }$\Lambda_{a}-\left\{  b\right\}  $\emph{) onto
}%
\begin{equation}
\{x\in\mathbb{R}^{n}-\left\{  b\right\}  :q\left(  x-a\right)  =q\left(
b-a\right)  \}\emph{\ }\text{\emph{(resp. }}\{x\in\mathbb{R}^{n}-\left\{
b\right\}  :B(a,x-b)=0\}\text{\emph{)\label{sdash}}} \tag*{(16)}%
\end{equation}
\emph{for each }$a\in\mathbb{R}^{n}$\emph{, and maps }$\mathbb{Q}^{n}-\left\{
b\right\}  $\emph{\ onto }$\mathbb{Q}^{n}-\left\{  b\right\}  $\emph{ (note
that if }$q(x)\equiv\left\vert x\right\vert ^{2}$\emph{ and hence
}$B(x,y)\equiv\left\langle x,y\right\rangle $\emph{, }$L(x)\equiv x$\emph{,
then \ref{phaiq} coincides with \ref{phai} and \ref{sdash} coincides with
}$S_{a}-\left\{  b\right\}  $\emph{\ (resp. }$\Lambda_{a}-\left\{  b\right\}
$\emph{) for each }$a\in\mathbb{R}^{n}$\emph{). Therefore, denoting it by
}$\varphi^{\prime}$\emph{ and denoting }%
\[
\{x\in\mathbb{R}^{n}:q\left(  x-a\right)  =q\left(  b-a\right)
\}\text{\emph{\ (resp. }}\{x\in\mathbb{R}^{n}:B(a,x-b)=0\}\text{\emph{)}}%
\]
\emph{by }$S_{a}^{\prime}$\emph{\ (resp. }$\Lambda_{a}^{\prime}$\emph{) for
each }$a\in\mathbb{R}^{n}$\emph{, we have }%
\begin{align*}
\text{\emph{\ref{rewrite}}}  &  \iff\varphi^{\prime}(\operatorname{Cl}%
(\Pi_{\gamma}\cap\mathbb{Q}^{n}))=\varphi^{\prime}\left(  \bigcap_{\sigma
\in\Gamma}(\Pi_{\Re\sigma_{\ast}(\gamma)}\cap\Lambda_{\Im\sigma_{\ast}%
(\gamma)})\right) \\
&  \iff\operatorname{Cl}(\varphi^{\prime}(\Pi_{\gamma}\cap\mathbb{Q}%
^{n}))-\{b\}=\bigcap_{\sigma\in\Gamma}\varphi^{\prime}(\Pi_{\Re\sigma_{\ast
}(\gamma)}\cap\Lambda_{\Im\sigma_{\ast}(\gamma)})\\
&  \iff\operatorname{Cl}(S_{\gamma}^{\prime}\cap\mathbb{Q}^{n}-\left\{
b\right\}  )-\{b\}=\bigcap\limits_{\sigma\in\Gamma}(S_{\Re\sigma_{\ast}%
(\gamma)}^{\prime}\cap\Lambda_{\Im\sigma_{\ast}(\gamma)}^{\prime}-\{b\})\\
&  \iff\operatorname{Cl}(S_{\gamma}^{\prime}\cap\mathbb{Q}^{n})-\{b\}=\bigcap
\limits_{\sigma\in\Gamma}(S_{\Re\sigma_{\ast}(\gamma)}^{\prime}\cap
\Lambda_{\Im\sigma_{\ast}(\gamma)}^{\prime})-\{b\}\\
&  \iff\operatorname{Cl}(S_{\gamma}^{\prime}\cap\mathbb{Q}^{n})=\bigcap
\limits_{\sigma\in\Gamma}(S_{\Re\sigma_{\ast}(\gamma)}^{\prime}\cap
\Lambda_{\Im\sigma_{\ast}(\gamma)}^{\prime})\text{,}%
\end{align*}
\emph{which and Proposition \ref{both2} and the fact that \ref{paisolution2}
holds imply that }%
\[
\operatorname{Cl}(S_{\gamma}^{\prime}\cap\mathbb{Q}^{n})=\bigcap
\limits_{\sigma\in\Gamma}(S_{\Re\sigma_{\ast}(\gamma)}^{\prime}\cap
\Lambda_{\Im\sigma_{\ast}(\gamma)}^{\prime})
\]
\emph{holds. \ref{completeanswer} is the case }$q(x)\equiv\left\vert
x\right\vert ^{2}$\emph{\ of this formula.}
\end{remark}

\begin{remark}
\emph{The entire paper remains valid if we replace \textquotedblleft let
}$\Omega$\emph{\ be the algebraic closure of }$\mathbb{Q}(\gamma_{1}%
,\ldots,\gamma_{n})$\emph{\ in }$\mathbb{C}$\emph{\textquotedblright\ by
\textquotedblleft let }$\Omega$\emph{\ be an algebraically closed subfield of
}$\mathbb{C}$ \emph{containing }$\mathbb{Q}(\gamma_{1},\ldots,\gamma_{n}%
)$\emph{\textquotedblright. }
\end{remark}

\begin{remark}
\emph{Let }$k$\emph{\ be a subfield of }$\mathbb{R}$\emph{. Then the entire
paper remains valid if we replace }$\mathbb{Q}$\emph{\ by }$k$
\emph{everywhere.}
\end{remark}

Jun-ichi Matsushita

Contact address: 6-5-15, Higashiogu, Arakawa-ku, Tokyo 116-0012, Japan

E-mail: j-matsu@pk.highway.ne.jp

\end{document}